\patchcmd{\@setaddresses}{\indent}{\noindent}{}{}
\patchcmd{\@setaddresses}{\indent}{\noindent}{}{}
\patchcmd{\@setaddresses}{\indent}{\noindent}{}{}
\patchcmd{\@setaddresses}{\indent}{\noindent}{}{}
\newtheorem{thm}{Theorem}[section]
\newtheorem{cor}[thm]{Corollary}
\newtheorem{lem}[thm]{Lemma}
\newtheorem{prop}[thm]{Proposition}
\theoremstyle{definition}
\newtheorem{rem}[thm]{Remark}
\newtheorem{ex}[thm]{Example}
\newtheorem*{rem*}{Remark}
\titleformat{\section}{\normalfont\bfseries\centering}{\thesection.}{.25em}{}
\titleformat{\subsection}{\normalfont\bfseries}{\thesubsection.}{.25em}{}
\numberwithin{equation}{section}
\newcommand{\Indicator}{\mathds{1}}
\newcommand{\one}{\mathbf{1}}
\newcommand{\braces}[1]{{\rm (}#1{\rm )}}
\newcommand{\rmref}[1]{{\rm\ref{#1}}}
\newcommand{\Fourier}{\mathcal{F}}
\newcommand{\R}{\ensuremath{\mathbb R}}    
\newcommand{\CC}{\ensuremath{\mathbb C}}   
\newcommand{\Q}{\ensuremath{\mathbb Q}}    
\newcommand{\N}{\ensuremath{\mathbb N}}    
\newcommand{\Z}{\ensuremath{\mathbb Z}}    
\newcommand{\iso}{\circ}
\newcommand{\<}{\langle}
\renewcommand{\>}{\rangle}
\newcommand{\calA}{\mathcal A}         
\newcommand{\calB}{\mathcal B}
\newcommand{\calF}{\mathcal F}         
\newcommand{\calG}{\mathcal G}         
\newcommand{\calH}{\mathcal H}         
         \newcommand{\frakI}{\mathfrak I}
\newcommand{\calK}{\mathcal K}         
\newcommand{\calL}{\mathcal L}
\newcommand{\calS}{\mathcal S}
\newcommand{\la}{\lambda}
\newcommand{\vphi}{\varphi}
\newcommand{\bmat}[4]
{
   \begin{bmatrix}
      #1 & #2\\
      #3 & #4
   \end{bmatrix}
}
\newcommand{\smallmat}[4]{\left(\begin{smallmatrix}#1 & #2\\#3 & #4\end{smallmatrix}\right)}
\newcommand{\sbmat}[4]{\left[\begin{smallmatrix}#1 & #2\\#3 & #4\end{smallmatrix}\right]}
\newcommand{\linspan}{\operatorname{span}}
\newcommand{\ran}{\operatorname{ran}}
\newcommand{\Llra}{\Longleftrightarrow}
\newcommand{\ol}{\overline}
\newcommand{\directSum}{\boxplus}
\newcommand{\wh}{\widehat}
\newcommand{\Inv}{\mathfrak I}
\newcommand{\diag}{\operatorname{diag}}
\newcommand{\La}{\Lambda}
\newcommand{\HH}{\mathbb{H}}
\newcommand{\WW}{\mathbb{W}}
\renewcommand{\setminus}{\backslash}
\newcommand{\topp}{{\!\top}}
\begin{document}
\title[]{A Balian--Low type theorem\\{} for Gabor Riesz sequences of arbitrary density}

\author[A. Caragea]{Andrei Caragea}
\address{{\bf A.~Caragea:} KU Eichst\"att--Ingolstadt, Mathematisch--Geographische Fakult\"at,
Ostenstra\ss{}e 26, Kollegiengeb\"aude I Bau B, 85072 Eichst\"att, Germany}
\email{andrei.caragea@gmail.com}

\author[D.G. Lee]{Dae Gwan Lee}
\address{{\bf D.G.~Lee:} KU Eichst\"att--Ingolstadt, Mathematisch--Geographische Fakult\"at,
Os\-ten\-stra\-\ss{}e 26, Kollegiengeb\"aude I Bau B, 85072 Eichst\"att, Germany}
\email{daegwans@gmail.com}

\author[F. Philipp]{Friedrich Philipp}
\address{{\bf F.~Philipp:} Technische Universit\"at Ilmenau, Institute for Mathematics, Weimarer Stra\ss e 25, D-98693 Ilmenau, Germany}
\email{friedrich.philipp@tu-ilmenau.de}

\author[F. Voigtlaender]{Felix Voigtlaender}
\address{{\bf F.~Voigtlaender:}
KU Eichstätt-Ingolstadt,
Lehrstuhl Reliable Machine Learning,
Ostenstraße 26,
85072 Eichstätt,
Germany}
\email{felix.voigtlaender@ku.de}


\begin{abstract}
We consider Gabor Riesz sequences generated by a lattice $\Lambda \subset \R^2$ and a window
function $g \in L^2(\R)$ which is well localized in both time and frequency.
When $g$ belongs to the Feichtinger algebra,
we prove that only those time-frequency shifts with parameters from the lattice $\Lambda$
leave the corresponding Gabor space invariant.
This improves on earlier results where only lattices of rational density were considered.
A slightly weaker result is proved---again for lattices of general density---%
under the regularity assumptions of the classical Balian-Low theorem,
where both $g$ and its Fourier transform belong to the Sobolev space $H^1(\R)$.
The proof relies on a combination of methods from time-frequency analysis
and the theory of $C^\ast$-algebras, specifically the so-called irrational rotation algebra.
\vspace*{-0.45cm}
\end{abstract}

\subjclass[2010]{Primary: 42C15. Secondary: 42C30, 42C40.}
\keywords{Gabor systems;
Riesz sequences;
Time-frequency shifts;
Balian-Low theorem;
Ron-Shen duality;
Completeness of Gabor systems;
Feichtinger algebra;
Irrational rotation algebra.}

\maketitle
\thispagestyle{empty}

\section{Introduction}

When working with Gabor frames, the window function $g$ should have a good
time-frequency localization, so that the frame coefficients faithfully reflect
the time-frequency behavior of the analyzed function.
The \emph{Feichtinger algebra} $S_0(\R^d)$
\cite{FeichtingerNewSegalAlgebra,JakobsenNoLongerNewSegalAlgebra}
is a particularly popular window class.
Among other advantages, choosing a window from $S_0$ ensures that the canonical dual window
also belongs to the Feichtinger algebra \cite{GroechenigLeinert},
so that for example the membership of a function $f \in L^2(\R^d)$
in the modulation space $M^{p,q}(\R^d)$ can be characterized
in terms of the decay properties of its frame coefficients.
One crucial obstruction, however, is that a Gabor system with window belonging to $S_0(\R^d)$
can not form an orthonormal basis---in fact not even a Riesz basis---for $L^2(\R^d)$.
We call this phenomenon the \emph{$S_0$ Balian-Low theorem};
it is a consequence of the Amalgam Balian-Low theorem
\cite[Theorem~3.2]{BenedettoDifferentiationAndBLT}. The same no-go type result holds for the case where $g$ belongs to the space
$\HH^1(\R^d)$ consisting of functions in the $L^2$-Sobolev space $H^1(\R^d)$
whose Fourier transform also belongs to $H^1(\R^d)$.
This is the classical Balian-Low theorem; see \mbox{\cite[Theorem~8.4.5]{GroechenigTFFoundations}}
for the case of orthonormal bases, and \cite[Theorem~2.3]{DaubechiesBalianLow} for the general case.

Yet, even though a Gabor system with $g \in S_0(\R^d)$ cannot form a Riesz basis
for all of $L^2(\R^d)$, it might still be a \emph{Riesz sequence}, that is,
a Riesz basis for its closed linear span $\calG (g, \Lambda)$,
at least if $\calG(g,\Lambda)$ is a \emph{proper} subspace of $L^2(\R^d)$.
In this case, one might wonder about further properties---in addition
to being a proper subspace of $L^2(\R^d)$---that the Gabor space $\calG(g,\Lambda)$ has to have.
One important property in time-frequency analysis is the invariance of $\calG(g,\Lambda)$
under time-frequency shifts $T_a M_b$.
For lattices $\Lambda$ \emph{of rational density} and for dimension $d = 1$,
it was observed in \cite{TFInvarianceAndAmalgamBL} that if $(g,\Lambda)$ is a Riesz
sequence and if $g \in S_0(\R)$, then the set of parameters $(a,b) \in \R^2$
such that $\calG(g,\Lambda)$ is invariant under the time-frequency shift $T_a M_b$ is
exactly equal to $\Lambda$.
A multi-dimensional variant of this
was derived in \cite{AmalgamBLForSymplecticLatticesRationalDensity}.

These results generalize the $S_0$ Balian-Low theorem to subspaces of $L^2(\R)$.
Indeed, to derive the $S_0$ Balian-Low theorem from the above result,
note that if ${\calG(g, \Lambda) = L^2(\R)}$ then
$\calG(g,\Lambda)$ is invariant under \emph{all} time-frequency shifts,
even under those with $(a,b) \notin \Lambda$; hence, $g$ cannot belong to $S_0$.
A corresponding generalization of the \emph{classical} Balian-Low theorem was proved
in \cite{BLForSubspaces}; a quantitative version can be found in \cite{QuantitativeSubspaceBL}.

We emphasize that in all articles
\cite{TFInvarianceAndAmalgamBL,AmalgamBLForSymplecticLatticesRationalDensity,
BLForSubspaces,QuantitativeSubspaceBL} it is assumed that the generating lattice $\Lambda$
has rational density.
This restriction is needed in order to utilize the Zak transform which is used extensively in
\cite{TFInvarianceAndAmalgamBL,AmalgamBLForSymplecticLatticesRationalDensity,
BLForSubspaces,QuantitativeSubspaceBL}.
It is thus natural to ask whether the results in \cite{TFInvarianceAndAmalgamBL}
and \cite{BLForSubspaces} still hold for lattices with \emph{irrational} density.

In a sense, this question has analogies with the research concerning the regularity
of the canonical dual window of a Gabor frame.
In 1997 it was shown
(see \cite[Theorem~3.4]{FeichtingerGroechenigGaborFramesAndTFAnalysisDistributions})
that if $g\in S_0(\R^d)$ generates a Gabor frame for $L^2(\R^d)$
over a lattice \emph{of rational density}, then the canonical dual window
also belongs to $S_0(\R^d)$.
It was conjectured in the same article that this property
continues to hold for general lattices.
Six years later, this conjecture was confirmed by Gröchenig and Leinert \cite{GroechenigLeinert}
by using $C^*$-algebra methods.

Here, we likewise extend the result in \cite{TFInvarianceAndAmalgamBL} to arbitrary lattices:

\begin{restatable}{thm}{MainTheorem}
\label{thm:GeneralizedBLConjecture_Intro}
If $g \in S_0(\R)$ and $\Lambda \subset \R^2$ is a lattice such that the Gabor system $(g, \La)$ is
a Riesz basis for its closed linear span $\calG(g, \La)$, then the time-frequency shifts $T_a M_b$
that leave $\calG(g,\La)$ invariant satisfy $(a,b) \in \Lambda$.
\end{restatable}

As indicated above, the Zak transform is a powerful tool for analyzing Gabor systems
generated by lattices with rational density; yet, it is not of much use
in the case of irrational density lattices.
Consequently, the methods used in the present paper differ substantially from those in
\cite{TFInvarianceAndAmalgamBL,AmalgamBLForSymplecticLatticesRationalDensity,
BLForSubspaces,QuantitativeSubspaceBL}:
Instead of applying the Zak transform and thus dealing with functions on $\R^2$,
we work directly with the given objects and exploit the rich theory of time-frequency analysis.
Along the way, we obtain several new statements related to time-frequency shift
invariance that are interesting in their own right.

The proof of Theorem~\ref{thm:GeneralizedBLConjecture_Intro} consists of several steps.
First, for $g \in S_0(\R)$ and only assuming that $(g,\Lambda)$ is
a \emph{frame sequence}---that is, a frame for its closed linear span---%
we prove the following dichotomy:
\begin{equation}
  \parbox{0.8\textwidth}{
    \it Either $(g,\Lambda)$ spans all of $L^2(\R)$, or the set of $(a,b) \in \R^2$
    for which $T_aM_b$ leaves $\calG(g,\La)$ invariant
    is a lattice containing $\La$ as a sublattice;
  }
  \label{eq:IntroductionDichotomy}
  \tag{D}
\end{equation}
see Theorem \ref{thm:DichotomyForNiceWindows}.
This result significantly reduces the range of parameters $(a,b)$ that we need to consider.
Next, we give a characterization
for the invariance of $\calG(g,\La)$ under a time-frequency shift $T_aM_b$ with $(a,b) \notin \La$
in terms of the adjoint system of $(g, \Lambda)$;
see Theorem~\ref{thm:AdditionalTFShiftAdjointCharacterization}.
This characterization holds for general $g \in L^2(\R)$, not only for $g \in S_0(\R)$.
Combining this characterization with a deep existing result about traces of projections
in the so-called \emph{irrational rotation algebra} (see
\cite{PimsnerVoiculescuIrrationalRotationAlgebra,RieffelIrrationalRotationAlgebra}), we arrive
at the conclusion of Theorem~\ref{thm:GeneralizedBLConjecture_Intro}.

With Theorem~\ref{thm:GeneralizedBLConjecture_Intro} established for $g$ in the Feichtinger algebra,
it is natural to ask whether the same statement holds in the setting of the
\emph{classical} Balian-Low theorem, that is, when $g$ has finite uncertainty product
\(
  \bigl(
    \int x^2 |g(x)|^2 \,dx
  \bigr) \cdot
  \bigl(
    \int \omega^2 |\widehat{g}(\omega)|^2 \, d\omega
  \bigr)
  < \infty
  ,
\)
a condition which we simply write as $g \in \HH^1$.
Unfortunately, we were not able to prove a full-fledged version
of Theorem~\ref{thm:GeneralizedBLConjecture_Intro} for $g \in \HH^1$;
the best we could do is to show that the dichotomy \eqref{eq:IntroductionDichotomy}
described above for $g \in S_0$ still holds for $g \in \HH^1$.

The outline of the paper is as follows:
After recalling the necessary background on Janssen's representation,
time-frequency shift invariance, symplectic operators, and the two spaces $S_0(\R)$ and $\HH^1$
in Section~\ref{sec:Preliminaries},
the paper proper  starts in Section~\ref{sec:TFInvarianceForFrames},
where we prove the dichotomy \eqref{eq:IntroductionDichotomy} described above,
for $g \in S_0(\R) + \HH^1$.
Next, in Section~\ref{sec:TFInvariantAndFrameSequences} we show that one can reduce
to the case of a separable lattice $\Lambda = \alpha \Z \times \beta \Z$,
with an additional time-frequency shift of the form $T_{\alpha/\nu}$ for some $\nu \in \N_{\geq 2}$.
For this setting, we then derive a characterization in terms of the adjoint Gabor system.
Throughout Section~\ref{sec:TFInvariantAndFrameSequences},
the generating function $g$ is only assumed to be in $L^2(\R)$.
The paper culminates in Section~\ref{sec:S0}, where we
prove Theorem~\ref{thm:GeneralizedBLConjecture_Intro}.
Finally, Appendix~\ref{sec:Appendix} contains a short treatise
on the irrational rotation algebra and a corresponding result that is crucial for our
proof of Theorem~\ref{thm:GeneralizedBLConjecture_Intro}.

\section{Preliminaries}
\label{sec:Preliminaries}

For $a, b \in \R$ and $f \in L^2(\R)$ we define the operators of translation
by $a$ and modulation by $b$ as
\vspace{-0.3cm}
\[
  T_a f(x) := f(x-a)
  \quad \text{and} \quad
  M_b f(x) := e^{2\pi ib x} f(x),
\]
respectively. Both $T_a$ and $M_b$ are unitary operators on $L^2(\R)$
and hence so is the {\em time-frequency shift}
\[
  \pi(a,b)
  := T_a M_b
   = e^{-2\pi iab} \, M_b T_a .
\]
A \emph{lattice} $\Lambda \subset \R^2$ is any set of the form $\Lambda = A \Z^2$
with an invertible matrix $A \in \R^{2 \times 2}$.
The {\em density} of $\Lambda$ is defined by $d(\Lambda) = |\!\det A|^{-1}$.
Note that $A\Z^2 = \Z^2$ if and only if $A\in\Z^{2\times 2}$ and $\det A = \pm 1$.
This will be used heavily in the proof of Proposition~\ref{p:trichotomy} below.

A lattice $\Lambda$ is called {\em separable} if $A$ can be chosen to be diagonal,
i.e., $\Lambda = \alpha\Z\times\beta\Z$ with $\alpha,\beta > 0$.
The next lemma shows that every lattice can be transformed into a separable one
by means of a symplectic matrix; this will be used frequently.

\begin{lem}\label{lem:SymplecticLatticeSeparation}
  Let $A \in \R^{2 \times 2}$ be a non-singular matrix.
  Then there exists $C \in \R^{2 \times 2}$ with $\det C = 1$ such that $CA$ is diagonal,
  i.e., $CA\Z^2$ is separable.
\end{lem}

\begin{proof}
  Write $A = \smallmat{a}{b}{c}{d}$ and note $\Delta := ad - bc \neq 0$.
  If $a \neq 0$, choose $C = \smallmat{1+bc/\Delta}{-ab/\Delta}{-c/a}{1}$.
  Then a simple calculation yields $\det C = 1$ and $CA = \mathrm{diag} (a, \Delta/a)$.
  In the case $a = 0$ we have $b \neq 0 \neq c$ as $A$ is non-singular.
  Then $C := \smallmat{-d/b}{1}{-1}{0}$ satisfies $\det C = 1$ and $CA = \mathrm{diag} (c, -b)$.
\end{proof}

For a subset $M \subset L^2(\R)$, we denote its closure by $\overline{M}$.
Then, for $g \in L^2(\R)$ and a lattice $\Lambda\subset\R^2$ we set
\[
  (g,\Lambda) := \bigl\{ \pi(\la) g : \la \in \Lambda \bigr\}
  \qquad \text{and} \qquad
  \calG(g,\Lambda) := \ol{\vphantom{t} \linspan}\,(g,\Lambda) \subset L^2(\R).
\]

For the Fourier transform, we use the normalization
\(
  \Fourier f (\xi)
  = \widehat{f}(\xi)
  = \int_\R
      f(x) \, e^{-2 \pi i x \xi}
    \, d x
\)
for $f \in L^1(\R)$.
It is well-known that $\Fourier$ extends to a unitary map $\Fourier : L^2(\R) \to L^2(\R)$.

\subsection{Bessel vectors and \texorpdfstring{Janssen's}{Janssen’s} representation}

Let $\La = \alpha\Z\times\beta\Z$ be a separable lattice with $\alpha,\beta > 0$.
The \emph{adjoint lattice} of $\La$ is defined as
$\La^\iso = \tfrac{1}{\beta} \Z \times \tfrac{1}{\alpha}\Z$.
We say that $g \in L^2(\R)$ is a {\em Bessel vector} for $\La$ if the system $(g,\La)$
is a Bessel system in $L^2(\R)$, meaning that the {\em analysis operator} $C_{\La,g}$
corresponding to $(g,\La)$ is bounded as an operator from $L^2(\R)$ to $\ell^2(\Z^2)$.
It is defined by
\[
  C_{\La,g} f = \big(
                  \< f, T_{m\alpha} M_{n\beta} g \>
                \big)_{m,n\in\Z},
  \qquad f \in L^2(\R).
\]
We denote the set of Bessel vectors for $\La$ by $\calB_\La$.
This is a linear subspace of $L^2(\R)$ which is dense
because it contains the Schwartz space $\calS(\R)$;
see \cite[Corollary~6.2.3]{GroechenigTFFoundations}.
It is well known that $\calB_\La = \calB_{\La^\iso}$
(see \cite[Theorem~2.2(a)]{RonShenDuality}) and that
\begin{equation}\label{e:weak_Janssen}
  \sum_{m, n\in\Z}
    \<f, T_{m\alpha} M_{n\beta} g\>
    \big\< T_{m\alpha} M_{n\beta} h, u \big\>
  = \frac{1}{\alpha\beta}
    \sum_{k,\ell\in\Z}
      \big\< h, T_{\frac k\beta} M_{\frac\ell\alpha} g \big\>
      \big\< T_{\frac k\beta} M_{\frac\ell\alpha} f, u \big\>
\end{equation}
whenever at least three of $f,g,h,u\in L^2(\R)$ are Bessel vectors for $\La$;
this follows from \cite[Proposition~2.4]{j}.
Formula~\eqref{e:weak_Janssen} yields a useful representation
(the so-called \emph{Janssen representation})
of the cross frame operator $S_{\Lambda,g,h} : L^2(\R) \to L^2(\R)$
associated to Bessel vectors $g, h \in \mathcal{B}_\Lambda$.
This operator is defined by
\begin{equation}
  S_{\La,g,h}f
  := \sum_{m,n\in\Z}
       \<f, T_{m\alpha} M_{n\beta} g\>
       \cdot T_{m\alpha} M_{n\beta} h,
  \qquad f \in L^2(\R).
  \label{eq:FrameOperatorDefinition}
\end{equation}
Equation~\eqref{e:weak_Janssen} implies that
\begin{equation}\label{e:3bessel}
  S_{\La,g,h}f
  = \frac{1}{\alpha\beta}
    \sum_{k,\ell\in\Z}
      \big\<h,T_{\frac k\beta}M_{\frac\ell\alpha}g\big\>
      \cdot T_{\frac k\beta}M_{\frac\ell\alpha} f
  \qquad \text{if} \quad f,g,h \in \calB_\La
  .
\end{equation}
The series in Equations~\eqref{eq:FrameOperatorDefinition} and \eqref{e:3bessel}
both converge unconditionally in $L^2(\R)$.

\subsection{Time-frequency shift invariance}

For a closed linear subspace $\calG\subset L^2(\R)$, we denote by $\Inv(\calG)$
the set of all pairs ${(a,b) \in \R^2}$ such that $\calG$ is invariant under the
time-frequency shift $\pi(a,b)$; that is,
\[
  \Inv(\calG)
  := \big\{
       z\in\R^2
       :
       \pi(z)\calG\subset\calG
     \big\}.
\]
If $\calG = \calG(g,\Lambda)$ for some $g\in L^2(\R)$ and a lattice $\Lambda\subset\R^2$,
then clearly $\Lambda \subset \frakI(\calG)$.
Any time-frequency shift $\pi(z)$ with $z \in \frakI(\calG) \backslash \Lambda$ will be called an
{\em additional time-frequency shift} for $\calG(g,\Lambda)$.
For Gabor spaces $\calG = \calG(g,\Lambda)$, the set $\Inv(\calG)$ has some additional structure:

\begin{lem}[{\cite[Proposition~A.1]{TFInvarianceIntegerLattices}}]\label{lem:StabiliserIsGroup}
  Let $g \in L^2(\R)$, let $\Lambda \subset \R^2$ be a lattice,
  and define ${\calG := \calG(g, \Lambda)}$.
  If $z \in \R^2$, then $z\in \Inv(\calG)$ if and only if $\pi(z)g \in \calG$.
  Moreover, $\Inv(\calG)$ is a closed additive subgroup of $\R^2$.
\end{lem}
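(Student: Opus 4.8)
The plan is to run everything through the composition law for time–frequency shifts, reducing all three assertions to statements about the single vector $\pi(z)g$.

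First I would record that a direct computation (using the identity $T_aM_b = e^{-2\pi iab}M_bT_a$ noted above) gives $\pi(z)\pi(w) = c(z,w)\,\pi(z+w)$ for all $z,w\in\R^2$, with $|c(z,w)|=1$; in particular $\pi(z)$ and $\pi(w)$ commute up to a unimodular scalar. Taking $z=\lambda$, $w=\mu\in\Lambda$ yields $\pi(\lambda)\pi(\mu)g = c(\lambda,\mu)\,\pi(\lambda+\mu)g\in\calG$, so $\pi(\lambda)$ maps the generating set $(g,\Lambda)$ into $\calG$; since $\pi(\lambda)$ is bounded and linear, $\pi(\lambda)\calG\subseteq\calG$, and running the same argument with $-\lambda$ and composing gives $\pi(\lambda)\calG=\calG$ for every $\lambda\in\Lambda$. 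In particular $\Lambda\subseteq\Inv(\calG)$.

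For the claimed equivalence, the implication $z\in\Inv(\calG)\Rightarrow\pi(z)g\in\calG$ is trivial since $g=\pi(0)g\in\calG$. Conversely, suppose $\pi(z)g\in\calG$. For $\lambda\in\Lambda$ the composition law gives $\pi(z)\pi(\lambda)g = c\,\pi(\lambda)\bigl(\pi(z)g\bigr)$ with $|c|=1$, and since $\pi(z)g\in\calG$ and $\pi(\lambda)\calG=\calG$ we obtain $\pi(z)\pi(\lambda)g\in\calG$; hence $\pi(z)$ carries the generating set of $\calG$ into $\calG$, and boundedness and linearity of $\pi(z)$ promote this to $\pi(z)\calG\subseteq\calG$, i.e.\ $z\in\Inv(\calG)$. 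Closedness of $\Inv(\calG)$ in $\R^2$ then drops out from this characterization: the map $z\mapsto\pi(z)g$ is continuous from $\R^2$ to $L^2(\R)$ (continuity of translation plus dominated convergence), so if $z_n\to z$ with $\pi(z_n)g\in\calG$, then $\pi(z_n)g\to\pi(z)g$, which lies in $\calG$ as $\calG$ is closed, and the equivalence gives $z\in\Inv(\calG)$.

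Finally, to see that $\Inv(\calG)$ is a group: $0\in\Inv(\calG)$ is clear, and if $z,w\in\Inv(\calG)$ then $\pi(z+w)\calG = c(z,w)^{-1}\pi(z)\pi(w)\calG\subseteq\pi(z)\calG\subseteq\calG$, so $\Inv(\calG)$ is closed under addition. The genuinely nontrivial point — and the main obstacle — is the existence of additive inverses: $\pi(z)|_{\calG}$ is an isometry of $\calG$ into $\calG$, and one must show it maps \emph{onto} $\calG$; this can fail for a general closed subsemigroup of $\R^2$ containing $0$, so the argument must exploit the full-rank sublattice $\Lambda\subseteq\Inv(\calG)$. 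I would argue as follows: for $z\in\Inv(\calG)$, compactness of the torus $\R^2/\Lambda$ yields a subsequence along which $nz$ converges modulo $\Lambda$, hence integers $m_k\ge 1$ and lattice points $\lambda_k\in\Lambda$ with $m_kz-\lambda_k\to 0$ in $\R^2$; then $(m_k-1)z-\lambda_k\in\Inv(\calG)$ (by additivity, using $(m_k-1)z\in\Inv(\calG)$ and $-\lambda_k\in\Lambda\subseteq\Inv(\calG)$), and $(m_k-1)z-\lambda_k = (m_kz-\lambda_k)-z\to -z$, so closedness of $\Inv(\calG)$ forces $-z\in\Inv(\calG)$. (Equivalently, $\Inv(\calG)/\Lambda$ is a closed subsemigroup of the compact group $\R^2/\Lambda$, hence a subgroup, and one pulls this back through the quotient map.)
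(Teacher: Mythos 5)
The paper does not actually prove this lemma---it is imported verbatim as \cite[Prop.~A.1]{TFInvarianceIntegerLattices}---so there is no in-paper argument to compare against; your proposal has to stand on its own, and it does. The composition law $\pi(z)\pi(w)=c(z,w)\pi(z+w)$ with $|c(z,w)|=1$ is correct (one computes $T_aM_bT_{a'}M_{b'}=e^{2\pi i a'b}T_{a+a'}M_{b+b'}$), and from it the equivalence $z\in\Inv(\calG)\Leftrightarrow\pi(z)g\in\calG$, the closedness via continuity of $z\mapsto\pi(z)g$ in $L^2$, and closure under addition all follow exactly as you say. Most importantly, you correctly isolated the only genuinely delicate point: $\pi(z)$ restricted to $\calG$ is an isometry into $\calG$ but need not a priori be onto (a unitary can map an infinite-dimensional closed subspace properly into itself), so $-z\in\Inv(\calG)$ is not free. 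Your resolution---pass to the compact torus $\R^2/\Lambda$, extract $m_k\ge 1$ and $\lambda_k\in\Lambda$ with $m_kz-\lambda_k\to 0$, note $(m_k-1)z-\lambda_k\in\Inv(\calG)$ by the already-established semigroup property together with $\Lambda\subseteq\Inv(\calG)$, and let closedness force the limit $-z$ into $\Inv(\calG)$---is the standard ``closed subsemigroup of a compact group is a group'' argument and is complete and correct; note that it uses closedness of $\Inv(\calG)$, which you rightly established beforehand using only the pointwise characterization. This matches in spirit what one finds in the cited reference.
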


Lemma~\ref{lem:StabiliserIsGroup} shows that $z\in\Inv(\calG)$ implies $-z\in\Inv(\calG)$,
i.e., $\pi(z)\calG\subset\calG$ and $\pi(z)^{-1}\calG\subset\calG$.
Hence, we have $\pi(z)\calG = \calG$ whenever $z\in\Inv(\calG)$.

The next lemma characterizes the case when $\calG$ is invariant
under \emph{all} time-frequency shifts.

\begin{lem}\label{l:whole_space}
For a closed linear subspace $\calG\subset L^2(\R)$, $\calG\neq\{0\}$,
we have $\frakI(\calG) = \R^2$ if and only if $\calG = L^2(\R)$.
\end{lem}
\begin{proof}
Clearly, if $\calG = L^2(\R)$, then $\frakI(\calG) = \R^2$.
Conversely, assume that $\Inv(\calG) = \R^2$
and let $f\in\calG^{\perp}$ and $g\in\calG\backslash\{0\}$.
Then $\langle f, \pi(z)g \rangle = 0$ for all $z\in\R^2$, so that
the short-time Fourier transform $V_g f$ of $f$ with window $g$ satisfies $V_gf\equiv 0$.
By \cite[Corollary~3.2.2]{GroechenigTFFoundations} and since $g\neq 0$, this implies $f = 0$.
We have thus shown $\calG^{\perp} = \{0\}$,
whence $\calG = L^2(\R)$, since $\calG$ is a closed subspace of $L^2(\R)$.
\end{proof}

\subsection{Symplectic operators}
\label{sub:SymplecticOperators}

It is often useful to reduce a statement involving a non-separable lattice to one that
involves a separable lattice, since separable lattices are usually easier to handle.
For this reduction, we will use so-called \emph{symplectic operators}
(see \cite[Section~9.4]{GroechenigTFFoundations}).
Since we are working in dimension $d = 1$, a matrix $B \in \R^{2 \times 2}$
is \emph{symplectic} if and only if $\det B = 1$; see \cite[Lemma~9.4.1]{GroechenigTFFoundations}.
For any such matrix $B$, it is shown in \cite[Equation~(9.39)]{GroechenigTFFoundations} that
there exists a unitary operator $U_B : L^2(\R) \to L^2(\R)$ such that
\begin{equation}\label{eq:SymplecticOperatorDefinition}
  U_B \rho(z) = \rho(Bz) U_B,
  \quad z \in \R^2,
\end{equation}
where (as in \cite[Page~185 and Equation~(9.25)]{GroechenigTFFoundations})
\[
  \rho(a,b) := e^{\pi iab} \cdot \pi(a,b).
\]
In the sequel, we fix for each $B \in\R^{2\times 2}$ with $\det B = 1$ one choice
of the operator $U_B$, and for functions $g\in L^2(\R)$, closed subspaces $\calG\subset L^2(\R)$,
and sets $\Lambda \subset \R^2$ we write
\[
  g_B := U_B \, g,
  \qquad
  \calG_B := U_B \, \calG,
  \qquad \text{and} \qquad
  \Lambda_B := B \Lambda.
\]
As shown in \cite[Page~197]{GroechenigTFFoundations},
given $B,C \in \R^{2 \times 2}$ with $\det B = \det C = 1$, we have
$U_B U_C = \theta_{B,C} \, U_{B C}$ for some $\theta_{B,C} \in \CC$ with $|\theta_{B,C}| = 1$.

Note that \eqref{eq:SymplecticOperatorDefinition} implies
\begin{equation}\label{eq:SymplecticOperatorInvariantSetTransformation}
  \pi(z)g\in\calG
  \;\Llra\,
  \pi(Bz)g_B\in\calG_B,
  \qquad z\in\R^2.
\end{equation}
Therefore, $(g,\Lambda)$ is a frame (Riesz basis, resp.)
for its closed linear span $\calG$ if and only if $(g_B,\Lambda_B)$ is a frame
(Riesz basis, resp.) for its closed linear span $\calG_B$.
Thanks to Lemma~\ref{lem:StabiliserIsGroup},
the equivalence \eqref{eq:SymplecticOperatorInvariantSetTransformation} also implies that
\begin{equation}\label{eq:InvariantSetSymplecticTransform}
  \Inv(\calG_B) = B\,\Inv(\calG).
\end{equation}

\subsection{The Feichtinger algebra}

We denote by $S_0(\R)$ the {\em Feichtinger algebra}, which is the space
of functions ${f \in L^2(\R)}$ such that $\<f,\pi(\cdot)\vphi\>\in L^1(\R^2)$
for some (and hence every; see \mbox{\cite[Proposition~12.1.2]{GroechenigTFFoundations}})
Schwartz function $\vphi \neq 0$.

Recall that $S_0(\R)$ is invariant under each operator $U_B$
(cf.~\cite[Proposition~12.1.3]{GroechenigTFFoundations}),
so that ${g \in S_0(\R)}$ always implies $g_B \in S_0(\R)$ for $B\in\R^{2\times 2}$
with $\det B = 1$.
Also, each $g \in S_0(\R)$ is a Bessel vector for any (separable) lattice
(see e.g.~\cite[Propositions~6.2.2 and 12.1.4]{GroechenigTFFoundations}).
Since for $g,h\in S_0(\R)$ and any $\alpha,\beta > 0$ the sequence
$\bigl(\< h, T_{m\alpha} M_{n\beta} g\>\bigr)_{m, n \in \Z}$ belongs to $\ell^1(\Z^2)$
(see \mbox{\cite[Corollary~12.1.12]{GroechenigTFFoundations}}),
it follows from \eqref{e:3bessel} and from the density of $\calB_\Lambda$ in $L^2(\R)$ that
\begin{equation}\label{e:S0_CFO}
  S_{\Lambda,g,h}
  = \frac{1}{\alpha\beta}
    \sum_{k,\ell\in\Z}
      \big\<h,T_{\frac k\beta}M_{\frac\ell\alpha}g\big\>
      \cdot T_{\frac k\beta}M_{\frac\ell\alpha}
  \quad \text{with} \quad \Lambda = \alpha \Z \times \beta \Z ,
\end{equation}
where the series converges absolutely in operator norm.

\subsection{The space \texorpdfstring{$\HH^1$}{IH¹}}

Let $H^1(\R)$ denote the space of all functions $f$ in $L^2(\R)$ for which
the weak derivative $f'$ exists and belongs to $L^2(\R)$.
In other words, $H^1 (\R) = W^{1,2}(\R)$ is an $L^2$-Sobolev-space.
It is well known (see \cite[Theorem~7.16]{LeoniSobolevSpaces})
that each $f \in H^1(\R)$ has a representative that is absolutely continuous
on $\R$ and whose classical derivative exists and coincides with the weak derivative $f'$
almost everywhere.

By $\HH^1$ we denote the space of all functions $f\in H^1(\R)$ whose Fourier transform $\wh f$
also belongs to $H^1(\R)$.
Equivalently, a function $f\in L^2(\R)$ is in $\HH^1$ if and only if $f',Xf\in L^2(\R)$,
where $Xf$ represents the function $\R \to \CC, x\mapsto x f(x)$.
The space $\HH^1$ also coincides with the modulation space $M_m^2(\R)$ with the weight
$m(x,\omega) = 1 + \sqrt{x^2 + \omega^2}$; see \cite[Corollary~2.3]{HeilTinaztepe}.

As shown in \cite[Proof of Theorem~1.4]{BLForSubspaces},
the space $\HH^1$ is invariant under symplectic operators,
meaning that $U_B g \in \HH^1$ if $g \in \HH^1$ and $B \in \R^{2 \times 2}$ with $\det B = 1$.

\section{Time-frequency shift invariance: A closer look}
\label{sec:TFInvarianceForFrames}

In this section, we first establish a certain trichotomy concerning the set
of invariant time-frequency shifts.
We then show that one of the three cases of the trichotomy is excluded
if the generator function $g$ is ``sufficiently nice''.

The next theorem establishes the trichotomy:
the invariance set $\Inv(\calG)$ either fills the whole space $\R^2$, or it consists
of equispaced lines that are aligned with the lattice, or it is a refinement of $\La$
(and in particular a lattice itself).
Note that this holds regardless of the regularity of the generator $g$
or the (frame) properties of the Gabor system $(g,\La)$.

\begin{prop}\label{p:trichotomy}
Let $H$ be a closed additive subgroup of $\R^2$ and
suppose that $H \supset \Lambda$ for a non-degenerate lattice $\Lambda \subset \R^2$.
Then there exist $\lambda_1,\lambda_2 \in \Lambda$ satisfying $\La = \Z\cdot\la_1 + \Z\cdot\la_2$
and $m,n\in\N_{\ge 1}$ such that exactly one of the following conditions holds:
\begin{enumerate}
	\item $H = \R^2$.
	\item $H = \R\cdot\la_1 + \Z\cdot\frac{\la_2}{n}$.
	\item $H = \Z\cdot\frac{\la_1}m + \Z\cdot\frac{\la_2}n$.
\end{enumerate}
In particular, if $\Lambda \subset \R^2$ is a lattice and $g \in L^2(\R)$,
then one of the above cases holds for $H = \Inv(\calG(g,\Lambda))$.
\end{prop}
\begin{proof}
By \cite[Theorem~9.11]{HewittRoss}, there are $\alpha,\beta \in \N_0$
and linearly independent vectors $x_1,\dots,x_\alpha,y_1,\dots,y_\beta \in \R^2$
(hence, $\alpha + \beta \leq 2$) such that
\[
  H = \R \, x_1 + \cdots + \R \, x_\alpha + \Z \, y_1 + \cdots + \Z \, y_\beta.
\]
Since $H$ contains the non-degenerate lattice $\Lambda$
(and thus two linearly independent vectors), we must have $\alpha + \beta = 2$.
Hence, there are three cases:
\begin{enumerate}
  \item[(i)]   $(\alpha,\beta) = (2,0)$ and hence $H = \R^2$,
  \item[(ii)]  $(\alpha,\beta) = (1,1)$, so that $H = \R v + \Z w$
        with linearly independent $v,w \in \R^2$,
  \item[(iii)] $(\alpha,\beta) = (0,2)$, so that $H$ is a (non-degenerate) lattice.
\end{enumerate}
Clearly, in Case~(i), Condition~(1) of the statement of the theorem holds.
Let us discuss the case (ii): $H = \R\cdot v + \Z\cdot w$.
Let $\La = \Z\cdot\mu + \Z\cdot\la$ be an arbitrary representation of $\La$.
Since $\La\subset H$, there exist $m,n\in\Z$ and $s,t\in\R$ such that
\[
  [\mu,\la]
  = [sv+mw,\,tv+nw]
  = [v,w]\bmat{s}{t}{m}{n}.
\]
Note that $\mu,\lambda$ are linearly independent, and hence $s n - t m \neq 0$,
so that $d = (sn-tm)^{-1}$ is well-defined.
Furthermore, we see
\[
  [v,w]
  = d \cdot [\mu,\la] \bmat{n}{-t}{-m}{s},
\]
which shows that $v = d (n \mu - m \la)$ and thus $\R \cdot v = \R \cdot \la_1$
with some $\la_1\in\La$.
By rescaling $\lambda_1$, we can ensure that $\tfrac{1}{k} \la_1 \notin \La$
for each $k \in \Z \setminus \{ -1,0,1 \}$.
Note because of $\R \cdot v = \R \cdot \lambda_1$ that $H = \R \cdot \la_1 + \Z \cdot w$.

Now, there exists $\la_2 \in \La$ such that $\La = \Z \cdot \la_1 + \Z \cdot \la_2$.
Indeed, writing $\La = A \Z^2$ with $A = [a_1, a_2] \in \R^{2 \times 2}$ invertible,
there exist $i, j \in \Z$ such that $\la_1 = i a_1 + j a_2$.
The numbers $i,j$ are necessarily coprime, since $\frac{1}{k} \lambda_1 \notin \Lambda$
for $k \in \Z \setminus \{ -1,0,1 \}$.
Hence, by B\'ezout's lemma there exist $k, \ell \in \Z$ such that $i \ell - j k = 1$.
Set $\la_2 = k a_1 + \ell a_2$.
Then $[\la_1, \la_2] \Z^2 = A \sbmat{i}{k}{j}{\ell} \Z^2 = A \Z^2 = \La$.
Since $\lambda_2 \in \La \subset H$, there exist $\sigma \in \R$ and $\nu \in \Z$
such that $\la_2 = \sigma \la_1 + \nu w$.
Then $\nu\neq 0$ and so
\(
  H
  = \R\cdot\la_1 + \Z\cdot(\frac{\la_2}{\nu} - \frac{\sigma}{\nu} \la_1)
  = \R\cdot\la_1 + \Z\cdot\frac{\la_2}{|\nu|}
  .
\)
Hence, Condition~(2) of the statement of the theorem holds.

Assume now that Case~(iii) holds: $H$ is a lattice, i.e., $H = \Z\cdot v + \Z\cdot w$
with linearly independent vectors $v,w\in\R^2$.
Write $\La = \Z\cdot\mu + \Z\cdot\la$ with linearly independent $\lambda,\mu \in \R^2$.
Then, because of $\La \subset H$, there exist $a,b,c,d\in\Z$ such that
\begin{equation}
  [\mu, \la]
  = [a v + c w, b v + d w]
  = [v,w] \bmat{a}{b}{c}{d}
  =: [v,w] \cdot A,
  \label{eq:TrichotomyProofCase3}
\end{equation}
with $A \in \Z^{2 \times 2}$.
Let $A = M D N^{-1}$ be the Smith canonical form of $A$
(see, for instance \mbox{\cite[Theorem~26.2]{macduffee}} or \cite[Theorem~3.8]{jacobson}),
where $M, N, D \in \Z^{2 \times 2}$ with ${\det M = \det N = 1}$ and $D$ is a diagonal matrix.
Note that $A$ (and hence $D$) is invertible; this follows from
\eqref{eq:TrichotomyProofCase3} since $\mu$ and $\lambda$ are linearly independent.
Moreover, note that ${[\mu,\la] N D^{-1} = [v, w] M}$.

Define $\la_1, \la_2 \in \La$ via $[\la_1, \la_2] := [\mu, \la] N$.
Then
\[
  \La
  = [\mu,\la]\Z^2
  = [\mu,\la]N\Z^2
  = [\la_1,\la_2]\Z^2
  = \Z\cdot\la_1 + \Z\cdot\la_2.
\]
Further, writing $D = \diag(m, n)$ with $m, n \in \Z \setminus \{ 0 \}$, we see
\[
  H
  = [v, w] \Z^2
  = [v, w] M \Z^2
  = [\mu, \la] N D^{-1} \Z^2
  = [\la_1, \la_2] D^{-1} \Z^2
  = \Z \cdot \tfrac{\la_1}{|m|} + \Z \cdot \tfrac{\la_2}{|n|} .
\]
This completes the proof of the theorem, since the conditions
(1)--(3) are clearly mutually exclusive.
\end{proof}

The example below shows that Case~(2) in Proposition~\ref{p:trichotomy}
can occur for $H \!=\! \Inv(\calG(g,\Lambda))$ for every lattice $\La$
with density smaller than one---even if $(g,\La)$ is a Riesz sequence.

\begin{ex}\label{ex:case2}
Due to Equation~\eqref{eq:InvariantSetSymplecticTransform}
and Lemma~\ref{lem:SymplecticLatticeSeparation} it suffices to construct an example
for a separable lattice $\La = \alpha\Z\times\beta\Z$ with $\alpha,\beta > 0$, $\alpha\beta > 1$.
For $m \in \Z$, define $E_m := m \alpha + [0, \frac{1}{\beta}]$,
and let $g := \sqrt{\beta} \cdot \one_{E_0}$.
Then
\[
  M_{n\beta}T_{m\alpha}g(x)
  = \sqrt{\beta} \cdot e^{2\pi in\beta x} \cdot\one_{[0,\frac 1\beta]}(x-m\alpha)
  = \sqrt{\beta} \cdot e^{2\pi in\beta x} \cdot\one_{E_m}(x).
\]
Hence, for any $m\in\Z$ the system $(T_{m\alpha}M_{n\beta}g)_{n\in\Z}$ is an orthonormal basis
for the subspace $L^2(E_m)$ of $L^2(\R)$.
Note that, since $\tfrac 1\beta < \alpha$, we have $[0,\frac 1\beta]\subsetneq [0,\alpha]$.
The system $(g,\La)$ is thus an orthonormal basis for $\calG := L^2(E) \subset L^2(\R)$,
where $E = \bigcup_{m\in\Z} E_m$.
Note that $\R \setminus E$ has positive (even infinite) measure, so that $\calG \subsetneq L^2(\R)$.
Moreover, for any $\omega\in\R$ we have
$M_\omega g = \sqrt{\beta} \cdot e^{2\pi i\omega\cdot} \cdot \one_{E_0} \in L^2(E_0) \subset \calG$.
Therefore, Lemma~\ref{lem:StabiliserIsGroup} shows $\{0\} \times \R \subset \frakI(\calG)$,
which can only occur in Case~(2) of Proposition~\ref{p:trichotomy},
since we would have $\calG = L^2(\R)$ in Case~(1), see Lemma~\ref{l:whole_space}.
\end{ex}

Note that the function $g$ in Example~\ref{ex:case2} is not well localized in frequency.
In the remainder of this section, we show that Case~(2) in Proposition~\ref{p:trichotomy}
\emph{cannot} occur if $(g,\Lambda)$ is a frame sequence with a sufficiently nice window $g$.
In this case, the trichotomy from Proposition~\ref{p:trichotomy} becomes a dichotomy.
By $g$ being ``sufficiently nice'' we mean that $g\in\WW(C,\ell^2)$, where
\[
  \WW(C,\ell^2)
  := \big\{
       f\in L^2(\R) : U_B f \in W(C,\ell^2)
       \text{ for all }
       B \in \R^{2\times 2} \text{ with } \det B = 1
     \big\}.
\]
Here, $W(C,\ell^2)$ is the so-called {\em Wiener Amalgam space}
consisting of all {\em continuous} functions $f : \R \to \CC$ satisfying
\[
  \| f \|_{W(C,\ell^2)}
  := \bigg(
     \sum_{k \in \Z} \,\,
       \sup_{x \in [k-1,k+1]}
         |f(x)|^2
     \bigg)^{1/2}
  < \infty .
\]
Recall from Section~\ref{sub:SymplecticOperators} that if $g \in \WW(C,\ell^2)$
and if $B,C \in \R^{2 \times 2}$ satisfy $\det B = \det C = 1$, then there is $\theta_{B,C} \in \CC$
satisfying ${U_C U_B g = \theta_{B,C}\,U_{C B} g \in W(C,\ell^2)}$.
This shows that ${U_B g\in\WW(C,\ell^2)}$ whenever $g \in \WW(C,\ell^2)$ and $\det B = 1$.

Before we prove the announced theorem let us show that the function classes
considered in this paper (namely, $S_0(\R)$ and $\HH^1$) are contained in $\WW(C,\ell^2)$.

\begin{lem}\label{lem:NiceWindowsAreTrichotomyAvoiding}
  We have $S_0(\R)\subset\WW(C,\ell^2)$ and $\HH^1\subset\WW(C,\ell^2)$.
\end{lem}

\begin{proof}
  If $g \in S_0(\R)$, then \cite[Proposition~12.1.3]{GroechenigTFFoundations}
  shows that $U_B g \in S_0(\R)$ for each $B \in \R^{2 \times 2}$ with $\det B = 1$.
  Similarly, if $g \in \HH^1$, then \cite[Proof of Theorem~1.4]{BLForSubspaces}
  shows that $U_B g \in \HH^1$ for each $B \in \R^{2 \times 2}$ with $\det B = 1$.
  Therefore, it suffices to show that $S_0(\R) \subset W(C, \ell^2)$
  and $\HH^1 \subset W(C,\ell^2)$.

  First, if $f \in S_0(\R)$, then \cite[Proposition~12.1.4]{GroechenigTFFoundations}
  shows that $\widehat{f}\in L^1(\R)$.
  By Fourier inversion, this implies that $f$ has a continuous representative.
  Furthermore, by \cite[Proposition~12.1.4]{GroechenigTFFoundations}
  we have $f\in W(L^\infty, \ell^1)$.
  Combined with the embedding ${\ell^1(\Z) \hookrightarrow \ell^2(\Z)}$,
  this easily implies $f \in W(L^\infty,\ell^2)$ and thus $f \in W(C,\ell^2)$.

  Next, if $f \in \HH^1 \subset H^1 = W^{1,2}(\R)$,
  then \cite[Theorem~7.16]{LeoniSobolevSpaces} shows (after changing $f$ on a null-set)
  that $f$ is absolutely continuous, and hence continuous,
  and satisfies $f(x) - f(y) = \int_y^x f'(t) \, d t$ for all $y < x$,
  where $f' \in L^2(\R)$ is the weak derivative of $f$.
  Now, note that if $n \in \Z$ and $x,y \in [n-1, n+1]$, then
  \begin{align*}
  |f(x)|
  & \leq |f(y)| + \int_{\min \{ x,y \}}^{\max\{ x,y \}} |f'(t)| \, dt
      \leq |f(y)| + \int_{n-1}^{n+1} |f'(t)| \, d t \\
  & \leq |f(y)| + \sqrt{2} \left( \int_{n-1}^{n+1} |f'(t)|^2 \, dt \right)^{1/2} ,
  \end{align*}
  and hence $|f(x)|^2 \leq 2 |f(y)|^2 + 4 \int_{n-1}^{n+1} |f'(t)|^2 \, d t$.
  Integrating this over $y \in [n-1,n+1]$ gives
  \[
      2|f(x)|^2\leq 2 \int_{n-1}^{n+1} |f(y)|^2 \, d y + 8 \int_{n-1}^{n+1} |f'(t)|^2 \, d t ,
  \]
  for all $x\in [n-1,n+1]$, which finally implies
  \[
      \| f \|_{W(C,\ell^2)}^2
      \leq \sum_{n \in \Z}
           \left(
             \int_{n-1}^{n+1} |f(y)|^2 \, d y
             + 4 \int_{n-1}^{n+1} |f'(t)|^2 \, d t
           \right)
      \lesssim \| f \|_{L^2}^2 + \| f' \|_{L^2}^2
      < \infty ,
  \]
  and hence $f \in W(C,\ell^2)$.
\end{proof}

Our next result shows that Case~(2) in Proposition~\ref{p:trichotomy} cannot occur
if $(g,\Lambda)$ is a frame sequence with generator $g \in \WW(C,\ell^2)\setminus\{0\}$.

\begin{thm}\label{thm:DichotomyForNiceWindows}
Let $g \in \WW(C,\ell^2) \backslash \{0\}$ and let $\Lambda \subset \R^2$ be a lattice
such that $(g,\Lambda)$ is a frame for $\calG = \calG(g, \Lambda)$.
Then either $\frakI(\calG) = \R^2$ or there exist $\la_1,\la_2\in\La$
and $m,n\in\N_{\ge 1}$ such that
\begin{equation}\label{e:refinement}
  \La = \Z\cdot\la_1 + \Z\cdot\la_2
  \qquad\text{and}\qquad
  \frakI(\calG) = \Z\cdot\tfrac{\la_1}m + \Z\cdot\tfrac{\la_2}n.
\end{equation}
\end{thm}

\begin{proof}
  Let us assume $\Inv(\calG) \subsetneq \R^2$.
  Writing $H = \Inv(\calG)$, the two possibilities in \eqref{e:refinement}
  represent the cases (1) and (3) from the trichotomy in Proposition~\ref{p:trichotomy}.
  It is thus enough to show that Case~(2) from that theorem cannot occur.
  Therefore, we assume towards a contradiction that Case~(2) holds,
  i.e., there are $\la_1,\la_2\in\La$ and $n\in\N_{\ge 1}$ such that
  $\La = \Z\cdot\la_1 + \Z\cdot\la_2$ and $\frakI(\calG) = \Z\cdot\frac{\la_1}{n} + \R\cdot\la_2$.

  {\bf Step 1.} We first derive a contradiction for the case $\la_1 = (\alpha,0)^\topp$
  and $\la_2 = (0,\beta)^\topp$ with some $\alpha,\beta > 0$.
  Then $\Lambda = \alpha\Z\times\beta\Z$, and $\{ 0 \}\times\R\subset\Inv(\calG)$.
  For $f\in\calG$ we thus have $M_\omega f\in\calG$ for all $\omega\in\R$.
  By \cite[Theorem~9.17]{RudinRealAndComplexAnalysis}
  (applied to the translation invariant space $\mathcal{F}^{-1} \calG$,
  with $\calF$ denoting the Fourier transform),
  there exists a Borel measurable set $E \subset \R$ such that $\calG = L^2(E)$,
  where we consider $L^2(E)$ as a closed subspace of $L^2(\R)$, in the sense that
  ${L^2 (E) = \{ f \in L^2(\R) \colon f = 0 \text{ a.e.~on } \R \backslash E \}}$.

  Our goal is to show that $E = \R$, up to null-sets.
  This will imply ${\calG = L^2(E) = L^2(\R)}$ and hence $\Inv(\calG) = \R^2$,
  providing the desired contradiction.
  Towards proving $E = \R$, let us consider for given $f \in L^2(\R)$
  the continuous function $\Gamma_f : \R \to \R$ defined by
  \[
    \Gamma_f(\omega)
    := \<S M_\omega f, M_\omega f\>,
    \quad\omega\in\R,
  \]
  where $S : L^2(\R)\to\calG$ denotes the frame operator of $(g,\Lambda)$.
  By \cite[Proposition~7.1.1]{GroechenigTFFoundations}, the operator $S$ has the
  \emph{Walnut representation}
  \[
    \langle S f, h \rangle
    = \beta^{-1} \sum_{n\in\Z}
                   \langle G_n \cdot T_{\frac n\beta} f, h \rangle
    \qquad \forall \, f,h \in L^\infty(\R) \text{ with compact support},
  \]
  where only finitely many terms of the sum do not vanish, and where
  \[
    G_n(x)
    := \sum_{m\in\Z} g(x - m\alpha) \cdot \ol{g(x - \tfrac{n}{\beta} - m\alpha)},
    \qquad x \in \R, n \in \Z .
  \]
  The fact that $g \in W(C,\ell^2)$ easily implies that the series
  defining $G_n$ converges locally uniformly, and that the $G_n$ are continuous functions.
  Since $G_n$ is also $\alpha$-periodic, this means that each $G_n$ is bounded.

  Now, since multiplication with $G_n$ commutes with the modulation $M_\omega$, using the identity
  ${T_{n/\beta} M_\omega = e^{-2\pi i \frac{n}{\beta} \omega} M_\omega T_{n/\beta}}$, we get
  \begin{equation}
    \Gamma_f(\omega)
    = \beta^{-1} \sum_{n\in\Z}
                   e^{-2\pi i\frac n\beta \omega}
                   \<G_n\cdot T_{\frac n\beta} f,f\>
    \qquad \forall \, f \in L^\infty (\R) \text{ with compact support},
    \label{eq:SpecialWalnutFormula}
  \end{equation}
	where there are only finitely many $n \in \Z$ (depending only on $f$, but not on the choice
  of $\omega$) for which $\langle G_n \cdot T_{\frac{n}{^\beta}} f, f \rangle \neq 0$.

  As $(g,\Lambda)$ is a frame for $\calG$ and $M_\omega f\in\calG$
  for all $\omega \in \R$ and $f \in \calG$, there exists $A > 0$
  such that $\Gamma_f(\omega) =  \< S M_\omega f, M_\omega f\> \ge A \|f\|_{L^2}^2$
  for all $f \in \calG$.
  Let us write $L^\infty_c(E)$ for the set of all compactly supported $f \in L^\infty(\R)$
  which satisfy $f = 0$ on $\R \backslash E$, and note that $L^\infty_c(E) \subset L^2(E) = \calG$.
  For $f \in L^\infty_c(E)$, integrate the estimate $\Gamma_f(\omega) \geq A \|f\|_{L^2}^2$
  over $[0, \beta]$ and apply Equation~\eqref{eq:SpecialWalnutFormula} to see
  \[
    \beta \, A \, \|f\|_{L^2}^2
    \,\leq\, \beta^{-1} \sum_{n\in\Z}
                          \<G_n\cdot T_{\frac n\beta}f,f\>
                          \int_0^\beta
                            e^{-2\pi i\frac n\beta \omega}
                          \,d\omega
    = \<G_0f,f\> = \<hf,f\>,
  \]
  where $h := G_0 = \sum_{m\in\Z}|T_{m\alpha}g|^2$.
  We have thus shown
  \[
    \int_E \big( h(x) - \beta A \big) \cdot |f(x)|^2 \, dx \,\geq\, 0
    \qquad \forall \, f \in L^\infty_c(E).
  \]
  Using standard arguments, this implies that $h(x) \geq \beta A$ for almost all $x \in E$.

  Since $T_{m\alpha} \, g \in \calG = L^2(E)$ and thus $T_{m\alpha} \, g(x) = 0$
  for almost all $x \in \R \backslash E$ and arbitrary $m \in \Z$, it follows that $h(x) = 0$
  for almost all $x \in \R \backslash E$.
  Recall from above that $h(x) \geq \beta A$ for almost all $x \in E$;
  thus, $h(x) \in \{0\} \cup [\beta A, \infty)$ almost everywhere.
  Also recall from above that $h = G_0$ is continuous.
  Hence, the \emph{open} set $h^{-1}((0, \beta A))$ has measure zero and is thus empty;
  that is, $h(x) \in \{0\} \cup [\beta A, \infty)$ for \emph{all} $x \in \R$.
  By the intermediate value theorem, this implies that
  $h(x) \geq \beta A$ for all $x \in \R$ (since $h \geq |g|^2$ and $g \not\equiv 0$) and thus,
  indeed, $E = \R$ (up to null-sets), since $h(x) = 0$ a.e.~on $\R \backslash E$.

  \medskip{}

  {\bf Step 2.}
	Let $\La$ be a general lattice.
  Recall that $\La = \Z \la_1 + \Z \la_2$
  and ${\frakI(\calG) = \Z \frac{\la_1}{n} + \R \la_2}$.
	By Lemma~\ref{lem:SymplecticLatticeSeparation} there exists $B \in \R^{2\times 2}$
  with $\det B = 1$ such that $B [\la_1, \la_2] = \diag(\alpha, \beta)$
  for certain $\alpha, \beta \in \R \setminus \{0\}$.
  We thus obtain $\La_B = B\La = B [\la_1, \la_2] \Z^2 = |\alpha| \Z \times |\beta| \Z$ and
  \[
    \Inv(\calG_B)
    = B \Inv(\calG)
    = B [\la_1, \la_2] \diag(\tfrac{1}{n}, 1)(\Z \times \R)
    = \diag(\tfrac{|\alpha|}{n}, |\beta|) (\Z \times \R)
    = \tfrac{|\alpha|}{n} \Z \times \R;
  \]
  see \eqref{eq:InvariantSetSymplecticTransform}.
  In particular, $\{0\}\times\R\subset\Inv(\calG_B)$.
  Hence, since $g_B = U_Bg\in\WW(C,\ell^2)$ and $(g_B,\La_B)$ is a frame for
  $\calG_B = U_B \calG \subsetneq L^2(\R)$ (cf.\ Subsection \ref{sub:SymplecticOperators}),
  we are in the situation of Step~1, which we proved to be impossible.
\end{proof}

By combining Theorem~\ref{thm:DichotomyForNiceWindows}
and Lemma~\ref{lem:NiceWindowsAreTrichotomyAvoiding}, we obtain the following corollary.

\begin{cor}\label{cor:AdditionalTFShiftControl}
  Let $g \in S_0(\R) \setminus \{ 0 \}$ or $g \in \HH^1 \setminus \{ 0 \}$
  and let $\Lambda\subset\R^2$ be a lattice
  such that $(g,\Lambda)$ is a Riesz basis for $\calG := \calG(g,\Lambda)$.
  Then $\Inv(\calG)$ is a refinement of $\La$ as in \eqref{e:refinement}.
\end{cor}

\begin{proof}
  By the Balian-Low theorem \cite[Theorem~2.3]{DaubechiesBalianLow}
  and the Amalgam Balian-Low theorem \cite[Theorem~3.2]{BenedettoDifferentiationAndBLT},
  it is not possible that $\calG = L^2(\R)$.
  Therefore, Lemma~\ref{l:whole_space} implies $\Inv(\calG)\neq\R^2$.
  The rest follows from Lemma~\ref{lem:NiceWindowsAreTrichotomyAvoiding}
  and Theorem~\ref{thm:DichotomyForNiceWindows}.
\end{proof}

\section{Time-frequency shift invariance: Duality}
\label{sec:TFInvariantAndFrameSequences}

Let us consider a Gabor Riesz sequence $(g,\Lambda)$ with $g\in\WW(C,\ell^2)$
as in the previous section, and assume that $\calG := \calG(g,\Lambda) \subsetneq L^2(\R)$,
but that there exists an additional time-frequency shift, meaning $\Inv(\calG) \neq \Lambda$.
In view of Theorem~\ref{thm:GeneralizedBLConjecture_Intro} it is our goal
to show that this is impossible, at least if $g \in S_0$.
To make the situation more accessible, we first reduce to the case
where $\Lambda = \alpha \Z \times \beta \Z$ is separable,
and where the additional time-frequency shift is of the form $(\frac{\alpha}{\nu}, 0)^\topp$
for some $\nu \in \N_{\geq 2}$, meaning that $T_{\alpha/\nu} \, g \in \calG$.
After that, we provide a characterization of this simplified condition
in terms of the adjoint Gabor system.
It is this characterization that we will use to prove our main result,
Theorem~\ref{thm:GeneralizedBLConjecture_Intro}, in the next section.

\begin{lem}\label{l:ReductionToSeparableLattice}
  Let $g \in \WW(C,\ell^2) \backslash \{0\}$ and let $\Lambda \subset \R^2$ be a lattice
  such that $(g,\Lambda)$ is a frame for $\calG := \calG(g, \Lambda)$.
  If $\calG \neq L^2(\R)$ and $\Inv(\calG) \neq \La$, there exist a matrix $B \in \R^{2\times 2}$
  with $\det B = 1$ and $\alpha, \beta > 0$ such that $\La_B = \alpha \Z \times \beta \Z$
  and $(\tfrac{\alpha}{\nu},0)^\topp  \in \Inv(\calG_B)$ for some $\nu \in \N$,
  $\nu \ge 2$ \braces{i.e., $T_{\frac{\alpha}{\nu}} g_B \in \calG_B$}.
\end{lem}

\begin{proof}
  Due to Theorem~\ref{thm:DichotomyForNiceWindows} and Lemma~\ref{l:whole_space}, we have
  $\La \!=\! [\la_1,\la_2]\,\Z^2$ and ${\Inv(\calG) \!=\! [\frac{\la_1}m,\frac{\la_2}n]\,\Z^2}$
  for suitable vectors $\la_1, \la_2 \in \R^2$ and $m, n \in \N \backslash \{0\}$.
  We may safely assume that $m \neq 1$.
  Indeed, since $\Inv(\calG)\neq\La$, we have $(m,n)\neq (1,1)$.
  If $m=1$, then with $J = \sbmat 01{-1}0$ also $\La = [\la_1,\la_2]J\Z^2 = [-\la_2,\la_1]\Z^2$
  and $\Inv(\calG) = [\frac{-\la_2}n,\frac{\la_1}m]\Z^2$.

  Now, by Lemma~\ref{lem:SymplecticLatticeSeparation} there exists a matrix $B\in\R^{2\times 2}$
  with $\det B=1$ such that $B[\la_1,\la_2] = \diag(\alpha,\beta)$,
  where $\alpha,\beta\in\R\backslash\{0\}$.
  Hence, $\La_B = B \La = |\alpha| \Z \times |\beta| \Z$ and
  \[
    \Inv(\calG_B)
    = B\Inv(\calG)
    = B[\la_1,\la_2]\diag(\tfrac 1m,\tfrac 1n)\Z^2
    = \tfrac{|\alpha|}{m} \Z \times \tfrac{|\beta|}{n} \Z;
  \]
  see \eqref{eq:InvariantSetSymplecticTransform}.
  In particular, $(\tfrac{|\alpha|}{m}, 0)^\topp \in \Inv(\calG_B)$ and $m \geq 2$.
\end{proof}

In what follows, fix $g \in L^2(\R)$, $\alpha, \beta > 0$, $\Lambda = \alpha \Z \times \beta \Z$,
and $\nu \in \N_{\geq 2}$, and assume that $(g,\Lambda)$ is a frame for $\calG = \calG(g,\Lambda)$.
The adjoint system ${\calF := \{ T_{k/\beta} M_{\ell/\alpha} \, g : k, \ell \in \Z\}}$
is then a frame for its closed linear span $\calK$ by \cite[Theorem~2.2 (c)]{RonShenDuality}.
Note that $\calK = L^2(\R)$ if and only if $(g,\La)$ is a Riesz sequence
(cf.~\cite[Thm.\ 2.2 (e)]{RonShenDuality} or \cite[Theorem~7.4.3]{GroechenigTFFoundations}).

It is a natural question to ask what the existence of an additional time-frequency shift
of the form $T_{\frac\alpha\nu}g\in\calG$ means for the adjoint system $\calF$.
To describe this, we set
\[
  \calF_s
  := \big\{
       T_{\frac k\beta} M_{\frac{\ell\nu}\alpha} M_{\frac s\alpha} g
       \colon
       k,\ell\in\Z
     \big\},
  \qquad s=0,\ldots,\nu-1.
\]
Again by \cite[Theorem~2.2~(c)]{RonShenDuality}, $\calF_0$ is a frame sequence if and only if
the system ${(g,\tfrac\alpha\nu\Z\times\beta\Z)}$ is a frame sequence.
In this case, each $\calF_s$ is a frame sequence because $M_{s/\alpha}\calF_0$ is,
and multiplying the vectors of a frame sequence by unimodular constants results in a frame sequence.
We set $\calL_s := \ol{\vphantom{t} \linspan}\,\calF_s$ for $s \in \{ 0, \ldots, \nu-1 \}$.
Note that
\[
\calK = \calL_0 + \cdots + \calL_{\nu-1}.
\]
Indeed, the inclusion ``$\supset$'' is trivial.
Conversely, since $\calF$ is a frame sequence, each ${f \in \calK}$ satisfies
$f = \sum_{k,\ell \in \Z} c_{k,\ell} \, T_{k/\beta} M_{\ell/\alpha} \, g$
with a suitable sequence ${c = (c_{k,\ell})_{k,\ell \in \Z} \! \in \! \ell^2(\Z^2)}$.
Since $\calF$ is a Bessel sequence,
\({
  f_s
  := \sum_{k,\ell \in \Z}
       c_{k, \ell \nu + s} T_{\frac{k}{\beta}} M_{\frac{\ell \nu + s}{\alpha}} g
  \in \calL_s
}\)
is well-defined for $s \in \{ 0, \dots, \nu-1 \}$, and
$f = f_0 + \dots + f_{\nu - 1} \in \calL_0 + \dots + \calL_{\nu - 1}$.
Finally, it is clear that $\calL_s = M_{\frac s\alpha}\calL_0$.

In the sequel, the symbol $\directSum$ denotes the {\em direct} (not necessarily orthogonal)
sum of subspaces, whereas $\oplus$ is used to denote an orthogonal sum.
The next theorem characterizes the existence of an additional time-frequency shift
for $\calG$ in terms of properties of the adjoint system $\calF$.

\begin{thm}\label{thm:AdditionalTFShiftAdjointCharacterization}
  Let $g \in L^2(\R)$ and $\alpha, \beta > 0$, and assume that $(g, \alpha \Z \times \beta \Z)$
  is a frame sequence with canonical dual window $\gamma \in \calG$, where
  $\calG = \calG(g, \alpha \Z \times \beta \Z)$.
  Let $\nu \in \N_{\geq 2}$, and define the systems $\calF_s$ and the spaces $\calK,\calL_s$
  as above, and set $S_{\gamma,g} := S_{\frac{1}{\beta}\Z \times \frac{\nu}{\alpha}\Z, \gamma, g}$,
  with notation as in Equation~\eqref{eq:FrameOperatorDefinition}.
  Then the following statements are equivalent:
  \begin{enumerate}
    \item[{\rm (i)}]   $T_{\frac\alpha\nu}g\in\calG$.
                       \vspace{0.1cm}

    \item[{\rm (ii)}]  $(\alpha\beta)^{-1}S_{\gamma,g}M_{\frac s\alpha}g = \delta_{s,0}\cdot g$
                       for $s=0,\ldots,\nu-1$.
                       \vspace{0.1cm}

    \item[{\rm (iii)}] $\calK = \calL_0 \directSum \cdots \directSum \calL_{\nu-1}$.
                       \vspace{0.1cm}

    \item[{\rm (iv)}] $\big\<T_{\frac k\beta}M_{\frac{\ell}\alpha}\gamma,g\big\> = 0$
                      for all $k \in \Z$ and all $\ell \in \Z \backslash \nu\Z$.
  \end{enumerate}
  If one of {\rm (i)--(iv)} holds, then for each $s = 0,\dots,\nu - 1$
  the system $\calF_s$ is a frame for $\calL_s$ and the operator
  \(
    P_s := (\alpha\beta)^{-1} M_{s/\alpha} S_{\gamma,g} M_{-s/\alpha}
  \)
  is the \braces{possibly non-orthogonal} projection onto $\calL_s$
  with respect to the decomposition
  \(
    L^2(\R)
    = (\calL_0 \directSum \cdots \directSum \calL_{\nu-1}) \oplus \calK^\perp.
  \)
\end{thm}

\begin{proof}
First, note by Ron-Shen duality (see \cite[Theorem~2.2(c)]{RonShenDuality}) that
$\calF$ is a frame sequence.
We will frequently use the following fact (see \cite[Theorem~2.3]{RonShenDuality}):
\begin{equation}\label{eq:CanonicalDualOfAdjointSystem}
  \text{
    $(\alpha\beta)^{-1}\gamma$ is the canonical dual window
    of $\calF = \big\{ T_{\frac{k}{\beta}} M_{\frac{\ell}{\alpha}} g : k,\ell\in\Z \big\}$
  };
\end{equation}
in particular, $\gamma \in \overline{\vphantom{t} \linspan} \, \calF = \calK$.

For the rest of the proof we set
\(
  P
  := (\alpha\beta)^{-1} S_{\gamma,g}
  = (\alpha \beta)^{-1} S_{\frac{1}{\beta}\Z \times \frac{\nu}{\alpha} \Z, \gamma, g}
  .
\)
It is well known (see for instance \cite[Equation~(5.25)]{GroechenigTFFoundations}) that
\begin{equation}\label{eq:SpecialFrameOperatorCommutationRelation1}
  P T_{\frac{k}{\beta}} M_{\frac{\ell\nu}{\alpha}}
  = T_{\frac{k}{\beta}} M_{\frac{\ell\nu}{\alpha}} P\qquad\text{for all }k,\ell\in\Z.
\end{equation}
Moreover, Equation~\eqref{e:3bessel} applied to the lattice
$\frac{1}{\beta} \Z \times \frac{\nu}{\alpha} \Z$ shows
for $f \in \calB_{\frac{1}{\beta}\Z \times \frac{\nu}{\alpha} \Z}$ that
\begin{equation}
  Pf = \sum_{m,n \in \Z}
         c_{m,n} \cdot T_{\frac{m\alpha}{\nu}} M_{n\beta} f
  \qquad\text{with}\qquad
  c_{m,n} = \tfrac{1}{\nu} \< g, T_{\frac{m\alpha}{\nu}} M_{n\beta} \gamma \> .
  \label{eq:SpecialProjectionRepresentation}
\end{equation}

Let us denote the orthogonal projection onto the subspace
$\calK = \overline{\vphantom{t} \linspan} \, \calF$ by $P_\calK$.
Note that Equation~\eqref{eq:CanonicalDualOfAdjointSystem} implies
\(
  S_{\frac{1}{\beta}\Z \times \frac{1}{\alpha}\Z, (\alpha \beta)^{-1} \gamma, g}|_{\calK}
  = \mathrm{id}_{\calK}
\)
and
\(
  S_{\frac{1}{\beta} \Z \times \frac{1}{\alpha}\Z, (\alpha \beta)^{-1} \gamma, g}|_{\calK^\perp}
  \equiv 0,
\)
so that $P_{\calK} = S_{\frac{1}{\beta}\Z \times \frac{1}{\alpha} \Z, (\alpha \beta)^{-1} \gamma, g}$.
Similarly, the orthogonal projection $P_{\calG}$ onto $\calG$ satisfies
$P_{\calG} = S_{\alpha\Z\times\beta\Z, \gamma, g}$.
Next, using \eqref{eq:SpecialProjectionRepresentation} and the elementary identity
${\sum_{s=0}^{\nu-1} e^{2\pi i \frac{m s}{\nu}} \!= \nu \!\cdot\! \Indicator_{\nu \Z} (m)}$,
we obtain
\begin{align}
  \begin{split}\label{eq:ProjectionDecomposition}
    \sum_{s=0}^{\nu-1}
      M_{\frac{s}{\alpha}}
      PM_{-\frac{s}{\alpha}}f
    &= \sum_{s=0}^{\nu-1} \,\,
         \sum_{m,n \in \Z}
           c_{m,n}\cdot
           M_{\frac{s}{\alpha}}
           T_{\frac{m\alpha}{\nu}}
           M_{n\beta}
           M_{-\frac{s}{\alpha}}f \\
    &= \sum_{m,n \in \Z}
         c_{m,n}
         \bigg(
           \sum_{s=0}^{\nu-1}
             e^{2\pi i\frac{ms}{\nu}}
         \bigg)
         T_{\frac{m\alpha}{\nu}}
         M_{n\beta}f \\
    &= \nu \sum_{m,n \in \Z}
             c_{\nu m, n}
             T_{m\alpha}
             M_{n\beta}f
     = \sum_{m,n \in \Z}
         \< g, T_{m\alpha} M_{n\beta} \gamma \>
         \cdot T_{m\alpha} M_{n\beta}f \\
    ({\scriptstyle{\text{Equation } \eqref{e:3bessel}}})
    &= (\alpha \beta)^{-1}
       S_{\frac{1}{\beta}\Z \times \frac{1}{\alpha}\Z, \gamma, g} f
     = S_{\frac{1}{\beta} \Z \times \frac{1}{\alpha} \Z, (\alpha\beta)^{-1}\gamma, g} f
     = P_\calK f,
  \end{split}
\end{align}
for all $f \in \calB_{\frac{1}{\beta} \Z \times \frac{1}{\alpha} \Z}$
and hence for all $f\in L^2(\R)$ by density.
Here, we used that if $f \in \calB_{\frac{1}{\beta} \Z \times \frac{1}{\alpha} \Z}$, then
\({
  M_{-\frac{s}{\alpha}} f
  \in \calB_{\frac{1}{\beta} \Z \times \frac{1}{\alpha} \Z}
  \subset \calB_{\frac{1}{\beta} \Z \times \frac{\nu}{\alpha} \Z} .
}\)
Next, for $s = 0, \ldots, \nu-1$, we see
by another application of Equation~\eqref{eq:SpecialProjectionRepresentation} that
\begin{align}
  \begin{split}\label{eq:SpecialFrameOperatorConjugation}
    M_{-\frac{s}{\alpha}} P M_{\frac{s}{\alpha}} g
    &= \frac{1}{\nu}
       \sum_{m,n \in \Z}
         \< g, T_{\frac{m\alpha}{\nu}} M_{n\beta} \gamma\>
         \cdot M_{-\frac{s}{\alpha}}
               T_{\frac{m\alpha}{\nu}}
               M_{n\beta}
               M_{\frac{s}{\alpha}} g\\
    &= \frac{1}{\nu}
       \sum_{m,n \in \Z} \,
         \sum_{r=0}^{\nu-1}
           \big\< g, T_{\frac{\nu m-r}{\nu} \alpha} M_{n\beta} \gamma\big\>\cdot
           M_{-\frac{s}{\alpha}}
           T_{\frac{\nu m-r}{\nu} \alpha}
           M_{n\beta}
           M_{\frac{s}{\alpha}} g \\
    &= \frac{1}{\nu}
       \sum_{r=0}^{\nu-1}
         e^{2\pi i\frac{sr}{\nu}}\cdot
         T_{-\frac{r\alpha}{\nu}}
         \sum_{m,n \in \Z}
           \big\< T_{\frac{r\alpha}{\nu}}g, T_{m\alpha} M_{n\beta} \gamma\big\>\cdot
           T_{m\alpha} M_{n\beta} \, g \\
    &= \frac{1}{\nu}
       \sum_{r=0}^{\nu-1}
         e^{2\pi i\frac{sr}{\nu}}
         \cdot T_{-\frac{r\alpha}{\nu}} P_\calG T_{\frac{r\alpha}{\nu}} g,
  \end{split}
\end{align}
where $P_\calG$ is the orthogonal projection onto $\calG$.
Equation~\eqref{eq:SpecialFrameOperatorConjugation} shows that the vectors
\[
  v = \big(
        M_{-\frac{s}{\alpha}} P M_{\frac{s}{\alpha}} g
      \big)_{s=0}^{\nu-1}
  \qquad\text{and}\qquad
  u = \big(
        T_{-\frac{r\alpha}{\nu}} P_\calG T_{\frac{r\alpha}{\nu}} g
      \big)_{r=0}^{\nu-1}
\]
in $\big( L^2(\R) \big)^\nu$ satisfy $F_\omega u = \sqrt\nu\cdot v$,
where $F_\omega$ is the DFT-matrix ${F_\omega = \nu^{-1/2} \, \bigl(\omega^{sr}\bigr)_{s,r=0}^{\nu-1}}$
with $\omega = e^{2\pi i/\nu}$.

With this preparation, we now prove the equivalence of the statements (i)--(iv).

\textbf{(i)$\boldsymbol{\Leftrightarrow}$(ii):}
If $T_{\frac{\alpha}{\nu}} g \in \calG$, then Lemma~\ref{lem:StabiliserIsGroup} shows that
$T_{\frac{r\alpha}{\nu}} g \in \calG$ for all $r \in \Z$, so that
$T_{-\frac{r \alpha}{\nu}} P_{\calG} T_{\frac{r \alpha}{\nu}} g = g$ for all $r \in \Z$.
Since $\frac{1}{\nu} \sum_{r=0}^{\nu-1} e^{2 \pi i \frac{s r}{\nu}} = \delta_{s,0}$
for $s \in \{ 0,\dots,\nu-1 \}$, Property~(ii) then
follows from \eqref{eq:SpecialFrameOperatorConjugation}.
Conversely, if (ii) holds, then $v = (g, 0, \ldots, 0)^\topp $,
which implies that ${u = \sqrt{\nu} \cdot F_\omega^*v = (g, g, \ldots, g)^\topp }$.
In particular, $T_{-\alpha/\nu} P_\calG T_{\alpha/\nu} g = g$,
i.e., $T_{\alpha/\nu} \, g \in \calG$.

\smallskip{}

\textbf{(ii)$\boldsymbol{\Rightarrow}$(iii):}
Since $Pg = g$, it is a consequence
of \eqref{eq:SpecialFrameOperatorCommutationRelation1} that $P|_{\calL_0} = I|_{\calL_0}$.
Furthermore, for $s \in \{ 1, \dots, \nu - 1 \}$ and $k,\ell \in \Z$,
Equation~\eqref{eq:SpecialFrameOperatorCommutationRelation1} implies
\[
  P T_{\frac{k}{\beta}} M_{\frac{\ell\nu}{\alpha}} M_{\frac{s}{\alpha}} g
  = T_{\frac{k}{\beta}} M_{\frac{\ell\nu}{\alpha}} P M_{\frac{s}{\alpha}} g
  = 0,
\]
which shows $P|_{\calL_s} = 0$.
By using these observations and by noting $\calL_r = M_{r/\alpha} \calL_0$,
we see for $r,s \in \{ 0, \ldots, \nu-1 \}$ that
$P_r |_{\calL_r} = M_{r/\alpha}PM_{-r/\alpha}|_{\calL_r} = I|_{\calL_r}$
and furthermore $P_r |_{\calL_s} = M_{r/\alpha}PM_{-r/\alpha}|_{\calL_s} = 0$ for $s\neq r$.
Hence, the sum $\calK = \calL_0 \directSum \cdots \directSum \calL_{\nu-1}$ is direct,
and $P_s|_{\calK} = M_{s/\alpha}PM_{-s/\alpha}|_\calK$ is the projection onto $\calL_s$
with respect to this decomposition.
Finally, since $\gamma \in \calK$ and since $\calK$ is invariant under $T_{k/\beta} M_{\ell/\alpha}$,
it follows by definition of
\(
  P_s
  = (\alpha \beta)^{-1}
    M_{s/\alpha} \,
    S_{\frac{1}{\beta} \Z \times \frac{\nu}{\alpha} \Z, \gamma, g} \,
    M_{-s/\alpha}
\)
that $P_s |_{\calK^{\perp}} = 0$.
Therefore, $P_s$ is the projection onto $\calL_s$ with respect to the decomposition
$L^2(\R) = (\calL_0 \directSum \cdots \directSum \calL_{\nu - 1}) \oplus \calK^{\perp}$.

Finally, we show that $\calF_s$ is a frame for $\calL_s$,
where it clearly suffices to show this for $s = 0$.
Since $\calF_0$ is a Bessel sequence, \cite[Corollary~5.5.2]{ChristensenBook} shows
that we only need to prove that the synthesis operator
\[
  D : \quad
  \ell^2(\Z^2) \to L^2(\R),\qquad
  (c_{k,\ell})_{k,\ell \in \Z} \mapsto \sum_{k,\ell \in \Z}
                                         c_{k,\ell} \,
                                         T_{\frac{k}{\beta}} \,
                                         M_{\frac{\ell\nu}{\alpha}} \, g
\]
has closed range $\ran D = \calL_0$.
By definition of $\calL_0 = \overline{\vphantom{t} \linspan} \, \calF_0$,
we see ${\ran D \subset \calL_0}$.
Conversely, if $f \in \calL_0$, then
$f = P f = (\alpha \beta)^{-1} S_{\gamma,g} f = (\alpha \beta)^{-1} D c \in \ran D$
for the sequence $c = (c_{k,\ell})_{k,\ell \in \Z} \in \ell^2(\Z^2)$ given by
$c_{k,\ell} = \langle f, T_{k/\beta} M_{\ell \nu / \alpha} \gamma \rangle$.

\smallskip{}

\textbf{(iii)$\boldsymbol{\Rightarrow}$(ii):}
Since $P = (\alpha\beta)^{-1}S_{\gamma,g}$, we see by definition of $S_{\gamma,g}$
that $\ran P\subset\calL_0$.
Hence, $Pg - g \in \calL_0$.
On the other hand, again as a consequence of $\ran P \subset \calL_0$ we see that
$M_{s/\alpha} P M_{-s/\alpha} \, g \in \calL_s$, so that Equation~\eqref{eq:ProjectionDecomposition}
implies
\vspace{-0.2cm}
\begin{equation}
  \calL_0
  \ni P \, g - g
  = P \, g - P_{\calK} \, g
  = - \sum_{s=1}^{\nu - 1} M_{s/\alpha} \, P \, M_{-s/\alpha} \, g
  \in \calL_1 + \dots + \calL_{\nu - 1},
  \label{eq:DirectSumApplication}
\end{equation}
and thus $P g = g$ since the sum $\calL_0 + \dots + \calL_{\nu - 1}$ is direct.
Similarly, for any ${s \!\in\! \{1,\ldots,\nu \!-\! 1\}}$ we get because of
$\ran P \subset \calL_0$ that $M_{s/\alpha} P M_{-s/\alpha} \, g \in \calL_s$;
but this implies as in Equation~\eqref{eq:DirectSumApplication}
that
\[
  \calL_s
  \ni M_{\frac{s}{\alpha}} P M_{-\frac{s}{\alpha}} \, g
  = P_{\calK} g - \sum_{r \neq s} M_{\frac{r}{\alpha}} P M_{-\frac{r}{\alpha}} \, g
  \in \calL_0 + \linspan \{ \calL_r \colon r \neq s \}
  = \linspan \{ \calL_r \colon r \neq s \} .
\]
Again, since $\calL_0 + \dots + \calL_{\nu - 1}$ is a direct sum, this implies
$P M_{-\frac{s}{\alpha}} \, g = 0$ for ${s = 1,\ldots,\nu-1}$.
Since $P$ commutes with $M_{\pm\nu/\alpha}$ (see \eqref{eq:SpecialFrameOperatorCommutationRelation1}),
we have $PM_{(\nu-s)/\alpha} \, g = 0$ and therefore $PM_{s/\alpha} \, g = 0$ for $s=1,\ldots,\nu-1$.

\smallskip{}

\textbf{(i)$\boldsymbol{\Rightarrow}$(iv):}
Note that $(\gamma, \alpha \Z \times \beta \Z)$ is a frame sequence
and that $\calG = \calG(\gamma, \alpha \Z \times \beta \Z)$.
Further, Lemma~\ref{lem:StabiliserIsGroup} shows that $T_{\alpha/\nu} g \in \calG$
if and only if $\calG$ is invariant under $T_{\alpha/\nu}$,
if and only if $T_{\alpha/\nu}\gamma\in\calG$.
Let us consider the setting above with $g$ and $\gamma$ interchanged:
Define
\[
  \calF_s^*
  := \big\{
       T_{\frac{k}{\beta}} M_{\frac{\ell\nu}{\alpha}} M_{\frac{s}{\alpha}} \gamma
       \colon
       k,\ell\in\Z
     \big\},
  \qquad s=0,\ldots,\nu-1.
\]
Then, by using the implication ``(i)$\Rightarrow$(iii)'' in this setting, we get
$\calK^\ast = \calL_0^* \directSum \cdots \directSum \calL_{\nu-1}^*$,
where $\calL_s^* := \ol{\vphantom{t}\linspan}\,\calF_s^*$
and $\calK^\ast = \overline{\vphantom{t} \linspan} \, \calF^\ast$ with
$\calF^\ast := \bigl\{ T_{k/\beta} M_{\ell / \alpha} \gamma \colon k,\ell \in \Z \bigr\}$.
Note that $\calK = \calK^\ast$ by Equation~\eqref{eq:CanonicalDualOfAdjointSystem}.

We have $S_{g,\gamma} = S_{\gamma,g}^*$.
Hence, $M_{s/\alpha}P^*M_{-s/\alpha}$ is the projection onto $\calL_s^*$
with respect to the decomposition
$L^2(\R) = (\calL_0^* \directSum \cdots \directSum \calL_{\nu-1}^*)\oplus\calK^\perp$.
In particular, using the general formula $(\ker T)^\perp = \overline{\ran T^\ast}$ for a bounded
operator $T : \calH \to \calH$ (see \cite[Remarks after Theorem~II.2.19]{ConwayFA}) and
the elementary identity $(A + B)^{\perp} = A^\perp \cap B^\perp$ for subspaces $A,B \subset \calH$,
we get
\[
  \calL_0^*
  = \overline{\ran P^*}
  = (\ker P)^\perp
  = \bigl( \calL_1 \directSum \cdots \directSum \calL_{\nu-1} \bigr)^\perp \cap \calK.
  \vspace{0.15cm}
\]
For $k, \ell \in \Z$ and $s \in \{1,\ldots,\nu-1\}$ this implies
\(
  \big\<
    T_{\frac{k}{\beta}} M_{\frac{\ell\nu}{\alpha}} \gamma,
    M_{\frac{s}{\alpha}} g
  \big\>
  = 0,
\)
which is equivalent to (iv).

\smallskip{}

\textbf{(iv)$\boldsymbol{\Rightarrow}$(ii):}
For $s \in \{ 1,\ldots,\nu-1 \}$, we have
\begin{align*}
  P M_{\frac{s}{\alpha}} g
  &= \frac 1{\alpha\beta}
     \sum_{k,\ell\in\Z}
       \big\<
         M_{\frac{s}{\alpha}} g,
         T_{\frac{k}{\beta}} M_{\frac{\ell\nu}{\alpha}} \gamma
       \big\>
       T_{\frac{k}{\beta}} M_{\frac{\ell\nu}{\alpha}} g
   = 0 .
\end{align*}
Thanks to Equation~\eqref{eq:ProjectionDecomposition}, this implies $g = P_{\calK} g = Pg$.
Overall, we have thus shown $P M_{s/\alpha} \, g = \delta_{s,0} \, g$
for all $s \in \{ 0,\dots,\nu-1 \}$.
\end{proof}

Note that with $P:=(\alpha\beta)^{-1}S_{\gamma,g}$, the condition $Pf = f$ for $f\in\calL_0$
means that $(\alpha\beta)^{-1}\gamma$ is a dual window for the frame sequence
$\calF_0 = \big( g, \tfrac{1}{\beta} \Z \times \tfrac{\nu}{\alpha} \Z \big)$.
However, it is possible that $\gamma \notin \calL_0 = \overline{\vphantom{t} \linspan} \, \calF_0$.

\section{Proof of the main theorem}
\label{sec:S0}

In this section, we prove our main result, Theorem~\ref{thm:GeneralizedBLConjecture_Intro},
which we state here once more for the convenience of the reader.

\MainTheorem*

\begin{proof}
The claim is true if $\Lambda$ has rational density; see \cite[Theorem~1]{TFInvarianceAndAmalgamBL}.
Thus, assume that $\Lambda$ has irrational density $d(\Lambda) \in \R \backslash \Q$.
Write $\Lambda = A \Z^2$ with an invertible matrix $A \in \R^{2 \times 2}$.

Due to the Amalgam Balian-Low theorem \cite[Theorem~3.2]{BenedettoDifferentiationAndBLT},
it is not possible that $\calG := \calG(g,\La) = L^2(\R)$.
Hence, $\calG \neq L^2(\R)$.
Suppose towards a contradiction that $\Inv(\calG) \supsetneq \Lambda$.
According to Lemma~\ref{l:ReductionToSeparableLattice} there exist $B\in\R^{2\times 2}$
with $\det B=1$ and $\alpha,\beta > 0$ such that $\La_B = \alpha\Z\times\beta\Z$
and $T_{\frac\alpha\nu}g_B\in\calG_B$ for some $\nu\in\N_{\ge 2}$.
Set $h := g_B$ and $\calG_h := \calG_B = \calG(h,\alpha\Z\times\beta\Z)$.
Then $h\in S_0(\R)$ by \cite[Proposition~12.1.3]{GroechenigTFFoundations}
and $T_{\frac\alpha\nu}h\in\calG_h$.
Furthermore, note that with $(g,\Lambda)$, also $(h,\alpha \Z \times \beta \Z) = (g_B, \Lambda_B)$
is a Riesz sequence (cf.\ Subsection \ref{sub:SymplecticOperators}).

Let $\gamma$ be the canonical dual window for $(h, \alpha \Z \times \beta \Z)$.
By Ron-Shen duality (see \mbox{\cite[Theorem~7.4.3]{GroechenigTFFoundations}}), the adjoint system
$(h, \frac{1}{\beta} \Z \times \frac{1}{\alpha} \Z)$ is a frame for $L^2(\R)$.
Let $\gamma^{\natural}$ denote the canonical dual window
of $(h, \frac{1}{\beta} \Z \times \frac{1}{\alpha} \Z)$,
and note by Wexler-Raz orthogonality (see \cite[Theorem~7.3.1]{GroechenigTFFoundations})
that $\langle h, \gamma^{\natural} \rangle = (\alpha \beta)^{-1}$.
Next, note that \cite[Theorem~2.3]{RonShenDuality} shows
$\gamma^{\natural} = (\alpha \beta)^{-1} \gamma$ and hence
$\langle h, \gamma \rangle = \alpha \beta \cdot \langle h, \gamma^{\natural} \rangle = 1$,
which will be used below.

Since $T_{\alpha/\nu} h \in \calG_h$, Theorem~\ref{thm:AdditionalTFShiftAdjointCharacterization}
implies that $P_0 := (\alpha \beta)^{-1} S_{\frac{1}{\beta}\Z \times \frac{\nu}{\alpha} \Z, \gamma, h}$
is an idempotent (i.e., $P_0^2 = P_0$).
We now wish to apply Theorem~\ref{thm:TraceOfProjectionsVerySpecial} to derive a contradiction.
To this end, first note that
\(
  \alpha \beta
  = \big( d(\Lambda_B) \big)^{-1}
  = |\det B A|
  = |\det A|
  = \big( d(\Lambda) \big)^{-1}
  \in \R \backslash \Q.
\)
Next, set $U := M_{\beta}$ and $V := T_{\frac{\alpha}{\nu}}$.
A direct calculation shows that
\[
  UV = e^{2\pi i\theta} V U,
  \qquad \text{where} \qquad
  \theta := \tfrac{\alpha \beta}{\nu} \in \R \backslash \Q .
\]
Note that also $\gamma \in S_0(\R)$; see \cite[Theorem~7]{BalanDensityOvercompletenessLocalization}.
Hence, we may use Equation~\eqref{e:S0_CFO} and obtain
\begin{equation}\label{e:projection}
  P_0 = \frac{1}{\nu}
    \sum_{m,n\in \Z}
        \big\<
          h,
          T_{\frac{m\alpha}{\nu}}M_{n\beta}\gamma
        \big\>
        T_{\frac{m\alpha}{\nu}}
        M_{n\beta}
   = \sum_{m,n \in \Z}
         \frac 1\nu
         \big\<h, V^mU^n\gamma\big\>
         \,V^m U^n
       ,
\end{equation}
with coefficient sequence
\(
  a
  =  (a_{m,n})_{m,n\in \Z}
  := \big(
       \tfrac 1\nu \< h, V^m U^n\gamma\>
     \big)_{m,n\in \Z}\in \ell^1(\Z^2).
\)
Therefore, Theorem~\ref{thm:TraceOfProjectionsVerySpecial} shows that
$\tfrac{1}{\nu} = a_{0,0} \in \Z + \theta \Z$, say $\tfrac 1\nu = m + n \theta$ for some $m,n\in\Z$.
We must have $n\neq 0$, since otherwise $\tfrac 1\nu = m \in \Z$,
in contradiction to $\nu\geq 2$. Thus, $\theta = \frac{1}{n \nu} - \frac{m}{n} \in \Q$,
which is the desired contradiction, since $\theta = \frac{\alpha \beta}{\nu}$ is irrational.
\end{proof}

\begin{rem}
  On a first look, it might appear as if the proof of Theorem~\ref{thm:GeneralizedBLConjecture_Intro}
  would also apply in case of $g \in \HH^1$:
  First, the classical Balian-Low theorem implies that $\calG :=\calG(g,\Lambda) \subsetneq L^2(\R)$,
  so that Lemma \ref{l:ReductionToSeparableLattice} allows the reduction
  to a Gabor Riesz sequence $(h,\La)$ with $h\in\HH^1$,
  a separable lattice $\La = \alpha\Z\times\beta\Z$, and an additional time-frequency shift
  of the form ${T_{\alpha/\nu} \, h \in \calG}$.
  One can then apply Theorem~\ref{thm:AdditionalTFShiftAdjointCharacterization} to see that
  that ${L^2(\R) = \calL_0 \directSum \cdots \directSum \calL_{\nu-1}}$.
  In the $S_0$-case, we then employed Janssen's representation \eqref{e:projection} for the projection
  ${P_0 = (\alpha\beta)^{-1}S_{\frac{1}{\beta} \Z \times \frac{\nu}{\alpha} \Z, \gamma,h}}$,
  which then led to success in the proof of Theorem~\ref{thm:GeneralizedBLConjecture_Intro},
  thanks to existing results concerning the structure of the irrational rotation algebra.
  However, in the case $h \in \HH^1$ the series in \eqref{e:projection} might not converge
  \emph{in operator norm}, so that one does not know whether $P_0$ belongs to the
  irrational rotation algebra.
  Thus, the proof breaks down at this point.
\end{rem}

\section*{Acknowledgments}
A. Caragea acknowledges support by the DFG Grant PF 450/11-1.
D.G.\ Lee acknowledges support by the DFG Grants PF 450/6-1 and PF 450/9-1.
F.\ Voigtlaender acknowledges support by the DFG in the context of the Emmy Noether junior research group VO 2594/1-1.

The authors take pleasure in thanking Karlheinz Gröchenig for suggesting the idea of considering
the trace on the irrational rotation algebra.
In fact, we had established the characterization in
Theorem~\ref{thm:AdditionalTFShiftAdjointCharacterization}, but were unable to
prove that a projection as in \eqref{e:projection} cannot exist,
until Karlheinz Gröchenig suggested to us at SampTA~2019 that we should
try to use the trace on the irrational rotation algebra.
Without his hint, we would probably not have managed to prove our main result,
Theorem~\ref{thm:GeneralizedBLConjecture_Intro}.

In addition, the authors would like to thank Radu Balan, Ilya Krishtal, Götz E.~Pfander,
and Jordy van Velthoven for fruitful discussions and hints.

\appendix

\section{Appendix}\label{sec:Appendix}

In this section, we make use of a deep result concerning the structure of the
\emph{irrational rotation algebra} $\calA_\theta$ (see
\cite{DavidsonCStarAlgebrasByExample,PimsnerVoiculescuIrrationalRotationAlgebra,
RieffelIrrationalRotationAlgebra})
to prove the following auxiliary statement, which is a crucial ingredient for the
proof of our main result, Theorem~\ref{thm:GeneralizedBLConjecture_Intro}.
As usual, we denote the set of all bounded linear operators from a Hilbert space $\calH$ into
itself by $\calB(\calH)$.

\begin{thm}\label{thm:TraceOfProjectionsVerySpecial}
  Let $\calH \neq \{ 0 \}$ be a Hilbert space and let ${U,V\in\calB(\calH)}$ be unitary
  and such that $U V = e^{2 \pi i \theta} V U$, for some $\theta \in \R \backslash \Q$.
  If $a = (a_{k,\ell})_{k,\ell \in \Z} \in \ell^1(\Z^2)$ is such that the operator
  $P_a := \sum_{k,\ell \in \Z} a_{k,\ell} V^k U^\ell$ satisfies
  $P_a^2 = P_a$, then $a_{0,0} \in \Z + \theta \Z$.
\end{thm}

The proof will make use of some parts of the theory of $C^\ast$-algebras,
which we recall here for the convenience of the reader, based on \cite{MurphyCStarAlgebras}.
Readers familiar with $C^\ast$-algebras will probably want to skip this part%
---except possibly Lemma~\ref{l:similarity}.

A $C^\ast$-algebra is a (complex) Banach algebra $(A, \| \cdot \|)$,
additionally equipped with a map ${A \to A, x \mapsto x^\ast}$ (called the \emph{involution} on $A$),
satisfying the following properties:
\begin{itemize}
  \item $(x + y)^\ast = x^\ast + y^\ast$,
        $(\lambda \, x)^\ast = \overline{\lambda} \, x^\ast$,
        and $(x \, y)^\ast = y^\ast \, x^\ast$ and $(x^\ast)^\ast = x$
        for all $x,y \in A$ and $\lambda \in \CC$;

  \item $\| x^\ast \| = \| x \|$ and $\| x^\ast x \| = \| x \|^2$ for all $x \in A$.
\end{itemize}
An element $p\in A$ is called an {\em idempotent} if $p^2=p$.
An idempotent $p$ is called a \emph{projection} if additionally $p = p^\ast$ holds.
A $C^\ast$-algebra $A$ is called \emph{unital} if it contains a (necessarily unique)
element $1 \in A$ satisfying $1 \neq 0$ and $x \, 1 = 1 \, x = x$ for all $x \in A$.
In a unital $C^\ast$-algebra $A$, an element $x \in A$ is called
\emph{unitary} if $x^\ast x = 1 = x x^\ast$.
If $A$ is a unital $C^\ast$-algebra and $a\in A$, then $\sigma(a^*a)\subset [0,\infty)$;
see \cite[Theorem~2.2.4]{MurphyCStarAlgebras}.
Here, $\sigma(b) = \{ \lambda \in \CC \colon b - \lambda 1 \text{ not invertible in } A \}$.

\begin{lem}\label{l:similarity}
  Any idempotent $e$ in a unital $C^*$-algebra $A$ is similar to a projection $p\in A$.
  That is, there exist a projection $p \in A$ and an invertible element $a \in A$
  such that $e = a^{-1} p a$.
\end{lem}

\begin{proof}
We set $b := e^* - e$ and $z := 1 + b^* b$.
Note that $z$ is invertible since $\sigma (b^\ast b) \subset [0,\infty)$.
We have
\[
  ez
  = e + (e - e e^*)(e^* - e)
  = e e^* e
  = e + (e - e^*)(e^* e - e)
  = z e.
\]
Consequently, $e z^{-1} = z^{-1} e$ and, as $z = z^*$, also $e^* z^{-1} = z^{-1} e^*$.
Now, define the element $p := e z^{-1} e^*$.
We have $p^* = p$.
Furthermore, since we just saw that $z^{-1}$ commutes with $e$ and $e^\ast$
and that $e e^\ast e = z e$, we also see that ${p^2 = z^{-2} (e e^* e) e^* = z^{-1} e e^* = p}$.
Hence, $p$ is a projection.
We further observe that $ep = p$
and ${p e = e z^{-1} e^\ast e = z^{-1} e e^\ast e = z^{-1} z e = e}$.
Set $a := 1 - p + e$.
Then we see because of
\[
  (1 \mp p \pm e)(1 \pm p \mp e)
  = 1 \pm p \mp e \mp p - p + e \pm e + p - e
  = 1
\]
that $a$ is invertible with $a^{-1} = 1+p-e$.
Hence, from $ae = e-pe+e = e$ we obtain
\[
  a e a^{-1}
  = e(1 + p - e)
  = e + e p - e
  = e p
  = p,
\]
which proves the lemma.
\end{proof}

A closed subspace $B$ of a $C^*$-algebra $A$ is called a {\em $C^*$-subalgebra} of $A$
if it is closed under both multiplication and involution.
It is clear that $B$ is then itself a $C^*$-algebra.
As usual, given a subset $S \subset A$, there is a smallest (with respect to inclusion)
$C^*$-subalgebra of $A$ containing $S$.
We call it the \emph{$C^\ast$-algebra generated by $S$}, and denote it by $C^\ast(S)$.

A map $\varphi : A \to B$ between two $C^\ast$-algebras $A$ and $B$ is called a $\ast$-homomorphism
if it is linear and satisfies $\varphi(x \, y) = \varphi(x) \, \varphi(y)$ as well as
$\varphi(x^\ast) = [\varphi(x)]^\ast$ for all $x, y \in A$.
A bijective $\ast$-homomorphism is called a $\ast$-isomorphism.
Any $\ast$-homomorphism ${\varphi : A \to B}$ necessarily
satisfies $\| \varphi(x) \|_B \leq \| x \|_A$ for all $x \in A$, and is hence continuous;
see \mbox{\cite[Theorem~2.1.7]{MurphyCStarAlgebras}}.

\begin{proof}[Proof of Theorem \rmref{thm:TraceOfProjectionsVerySpecial}]
  We will make use of the so-called \emph{irrational rotation algebra} $\calA_\theta$,
  as introduced for instance in \mbox{\cite[Chapter~VI]{DavidsonCStarAlgebrasByExample}}.
  The actual definition of this algebra is not relevant for us;
  we will only need to know that it satisfies the following properties:
  \begin{itemize}
    \item $\calA_\theta$ is a unital $C^\ast$-algebra;
          \vspace{0.1cm}

    \item The algebra $\calA_\theta$ is \emph{universal} among all unital $C^\ast$-algebras
          generated by unitary elements $U,V$ satisfying $U V = e^{2 \pi i \theta} V U$.
          Thus, defining $\calA := C^\ast(U,V)$ as a $C^*$-subalgebra of $\calB(\calH)$ with $U,V$
          as in the statement of Theorem~\ref{thm:TraceOfProjectionsVerySpecial},
          there is a $\ast$-isomorphism $\varphi : \calA \to \calA_\theta$;
          this follows from \cite[Theorem~VI.1.4]{DavidsonCStarAlgebrasByExample}.
          \vspace{0.1cm}

    \item As shown in \cite[Corollary~VI.1.2 and Proposition~VI.1.3]{DavidsonCStarAlgebrasByExample},
          there is a unique {\em \braces{unital\,} trace} $\tau : \calA_\theta \to \CC$.
          By definition of a trace, this means in particular that $\tau$
          is linear and continuous, satisfying $\tau(1) = 1$
          and $\tau(x y) = \tau (y x)$ for all $x,y \in \calA_\theta$.%
          \vspace{0.1cm}

    \item For any projection $p \in \calA_\theta$, we have $\tau(p) \in \Z + \theta \Z$;
          see \cite[Theorem~1.2]{RieffelIrrationalRotationAlgebra}.
          We remark that this result was originally proven
          in \cite{PimsnerVoiculescuIrrationalRotationAlgebra}.
  \end{itemize}

  Let us define $\tau^{\natural} := \tau \circ \varphi$, and note that
  $\tau^{\natural} : \calA \to \CC$ is continuous.
  It is easy to see that $\tau^{\natural}$ is linear with
  $\tau^{\natural}(\mathrm{id}_{\calH}) = 1$ and $\tau^\natural (A B) = \tau^{\natural} (B A)$
  for all $A,B \in \calA$; this is called the \emph{cyclicity} of the trace.
  Next, from the relation $U V = e^{2 \pi i \theta} V U$,
  we immediately get for $k,\ell \in \Z$ that
  \[
    V^k U^\ell
    = e^{-2\pi i\ell\theta} V^{k-1} U^\ell V
    = e^{-2\pi ik\theta} UV^k U^{\ell-1}.
  \]
  Thus, noting that $V^kU^\ell\in\calA$, we obtain
  \(
    \tau^{\natural}(V^k U^\ell)
    = e^{-2\pi i\ell\theta}\tau^{\natural}(V^k U^\ell)
    = e^{-2\pi ik\theta}\tau^{\natural}(V^k U^\ell)
  \)
  by cyclicity.
  As $\theta$ is irrational, this implies $\tau^{\natural}(V^k U^\ell) =\delta_{\ell,0}\delta_{k,0}$.
  Next, since we have $\| V^k U^\ell \| = 1$ for all $k, \ell \in \Z$ and since
  $a \in \ell^1(\Z^2)$, we see that $P_a = \sum_{k,\ell \in \Z} a_{k,\ell} V^k U^\ell \in \calA$,
  with unconditional convergence of the defining series.
  Hence,
  \[
    \tau^{\natural}(P_a)
    = \sum_{k,\ell \in \Z}
        a_{k,\ell} \cdot \tau^{\natural} (V^k U^\ell)
    = a_{0,0}.
  \]
  Since $P_a^2 = P_a$ and since $\varphi : \calA \to \calA_\theta$
  is a $\ast$-homomorphism, we see that ${e := \varphi(P_a) \in \calA_\theta}$ is an idempotent.
  By Lemma \ref{l:similarity} there exist $b,p\in\calA_\theta$ such that $b$ is invertible,
  $p$ is a projection, and $e = b^{-1} p b$.
  Thanks to the cyclicity of the trace, we thus see that
  ${a_{0,0} = \tau^{\natural}(P_a) = \tau(e) = \tau(b^{-1} p b) = \tau(p)\in \Z + \theta \Z}$,
  as claimed.
\end{proof}

\vspace*{.3cm}
\section*{Author Affiliations}
\end{document}